\newcommand{\BZ}{\mathbb{Z}} 
\newcommand{\BR}{\mathbb{R}}
\newcommand{\BP}{\mathbb{P}}
\newcommand{\CA}{\mathcal{A}}
\newcommand{\CB}{\mathcal{B}}
\newcommand{\CT}{\mathcal{T}}
\newcommand{\CX}{\mathcal{X}}
\newcommand{\co}{\colon\thinspace}
\newcommand{\la}{\langle}
\newcommand{\ra}{\rangle}
\theoremstyle{plain}
\newtheorem{theorem}{Theorem}
\newtheorem*{theoremA}{Theorem \ref{th:A}}
\newtheorem*{theoremB}{Theorem \ref{th:B}}
\newtheorem*{theoremAPP}{Theorem \ref{th:local-qg}}
\newtheorem{lemma}[theorem]{Lemma}
\newtheorem{proposition}[theorem]{Proposition}
\newtheorem{corollary}[theorem]{Corollary}
\theoremstyle{definition}
\newtheorem{definition}[theorem]{Definition}
\newtheorem{remark}[theorem]{Remark}
\numberwithin{theorem}{section}
\renewcommand{\theequation}{\thesection.\arabic{equation}}
\let\ssection=\section
\renewcommand{\section}{\setcounter{equation}{0}\ssection}
\DeclareMathOperator{\Aut}{Aut}
\DeclareMathOperator{\Out}{Out}
\DeclareMathOperator{\IA}{IA}
\DeclareMathOperator{\MCG}{MCG}
\DeclareMathOperator{\Isom}{Isom}
\DeclareMathOperator{\vol}{vol}
\DeclareMathOperator{\GL}{GL}
\DeclareMathOperator{\SL}{SL}
\DeclareMathOperator{\Sp}{Sp}
\begin{document}


\title[Current twisting and nonsingular matrices]{Current twisting and
  nonsingular matrices}

\author[M.~Clay]{Matt Clay}
\address{Dept.\ of Mathematics \\
  University of Oklahoma\\
  Norman, OK 73019}
\email{mclay@math.ou.edu}

\author[A.~Pettet]{Alexandra Pettet}
\address{Dept.\ of Mathematics \\
  University of Michigan\\
  Ann Arbor, MI 48019}
\email{apettet@umich.edu}

\thanks{\tiny The second author is partially supported by NSF grant
  DMS-0856143 and NSF RTG DMS-0602191}

\begin{abstract}
  We show that for $k \geq 3$, given any matrix in $\GL(k,\BZ)$, there
  is a hyperbolic fully irreducible automorphism of the free group of
  rank $k$ whose induced action on $\BZ^k$ is the given matrix.
\end{abstract}
\maketitle


\section{Introduction}\label{sc:intro}

Considerable progress has been made in understanding the dynamics of
elements of the outer automorphism group of a nonabelian free group of
rank $k$, $\Out F_k$, by considering the corresponding situation for the
mapping class group of a compact oriented surface of genus $g$,
$\MCG(S_g)$.  Indeed, some of the most fruitful examples of this
pedagogy include the Culler--Vogtmann Outer space $CV_k$
\cite{Culler-Vogtmann}, as well as the Bestvina--Handel train-track
representatives \cite{Bestvina-Handel}.

As a consequence of the Thurston classification of elements in
$\MCG(S_g)$, the most important elements to understand are the
\emph{pseudo-Anosov mapping classes} \cite{ar:T88}.  Such elements are
characterized as those mapping classes for which no isotopy class of a
simple closed curve in $S_g$ is periodic.  If a mapping class fixes
the isotopy class of a simple closed curve, then it restricts to a
mapping class on the subsurface obtained by cutting along the simple
closed curve.  In general, if $f \in \MCG(S_g)$, then $S_g$
decomposes into subsurfaces (which only intersect along their
boundaries) such that for some $n$, the element $f^n$ can be
represented by a homeomorphism that restricts to each subsurface as
either the identity or a pseudo-Anosov map and acts as a Dehn twist in
a neighborhood of intersection of the subsurfaces.

An element $\phi \in \Out F_k$ is \emph{fully irreducible}, also
called \emph{irreducible with irreducible powers (iwip)}, if no
conjugacy class of a proper free factor of $F_k$ is periodic.  As
above, if $\phi$ is not fully irreducible, then $F_k$ has a free
factor $F_{k'}$ such that for some $n$, the element $\phi^n$ restricts
to an element of $\Out F_{k'}$.  However, it is not the case that
$\phi^n$ preserves some free factorization of $F_k$.  The dynamics of
iterating a fully irreducible element on a conjugacy class of an
element of $F_k$ are similar to the dynamics of iterating a
pseudo-Anosov mapping class on a simple closed curve
\cite{Bestvina-Handel}.

Thurston also characterized pseudo-Anosov mapping classes as those
elements $f \in \MCG(S_g)$ whose mapping torus $S_g \times [0,1]/
(x,0) \sim (f(x),1)$ admits a hyperbolic metric \cite{ar:T88}.
However the analogous characterization for fully irreducible elements
does not hold as the mapping torus $F_k \rtimes_{\Phi} \BZ$ is not
necessarily a hyperbolic group when $\Phi \in \Aut F_k$ represents a
fully irreducible element of $\Out F_k$.  Automorphisms of $F_k$ such
that the mapping torus $F_k \rtimes_{\Phi} \BZ$ is hyperbolic are
precisely those for which no nontrivial element of $F_k$ is periodic
\cite{Bestvina-Feighn_Combination,Brinkmann,Gersten_Iso}.  Using this
correspondence, we say an element $\phi \in \Out F_k$ is
\emph{hyperbolic} if no conjugacy class of a nontrivial element of
$F_k$ is periodic.  In the literature, such elements have also been
called \emph{atoriodal}.  We remark that there are hyperbolic elements
that are not fully irreducible and fully irreducible elements that are
not hyperbolic.  However, fully irreducible elements that are not
hyperbolic have a power that is realized by a pseudo-Anosov mapping
class on a surface with a single boundary component
\cite{Bestvina-Handel}.  When $k=2$, no element of $\Out F_k$ is
hyperbolic as $\Out F_2 \cong \MCG^{\pm}(S_{1,1})$ where $S_{1,1}$ is
the torus with a single puncture.

One method to understand an element of $\MCG(S_g)$ is to examine its
action on the first homology of the surface, $H_1(S_g,\BZ) \cong
\BZ^{2g}$.  Any such element preserves the algebraic intersection
number between curves on $S_g$, giving the short exact sequence:
\[ 1 \to \mathcal{I}_g \to \MCG(S_g) \buildrel{f \mapsto
  f_*}\over\longrightarrow \Sp(2g,\BZ) \to 1. \] Similarly, the action
of an outer automorphism on $H_1(F_k,\BZ) \cong \BZ^k$ leads to the
following short exact sequence:
\[ 1 \to \IA_k \to \Out F_k \buildrel{\phi \mapsto
  \phi_*}\over\longrightarrow \GL(k,\BZ) \to 1. \]

There are various homological criteria that ensure that a given
element of the mapping class group is pseudo-Anosov
\cite{Casson-Bleiler,Koberda,Margalit-Spallone} or, in the free group
setting, that a given element of $\Out F_k$ is hyperbolic and fully irreducible
\cite{Gersten-Stallings}.  The main goal of this paper is to
generalize to the free group setting a theorem of Papadopoulos showing
that there is no homological obstruction for an element to be
pseudo-Anosov \cite{Papadopoulos}, i.e., for any $A \in \Sp(2g,\BZ)$,
there is a pseudo-Anosov mapping class $f \in \MCG(S)$ such that $f_*
= A$.

\begin{theoremA}
  Suppose $k \geq 3$.  For any $A \in \GL(k,\BZ)$, there is a
  hyperbolic fully irreducible outer automorphism $\phi \in \Out F_k$
  such that $\phi_* = A$.
\end{theoremA}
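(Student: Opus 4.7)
The plan is to reduce to a twisting problem inside $\IA_k$. Since $\phi \mapsto \phi_*$ is surjective with kernel $\IA_k$, I would first fix any $\phi_0 \in \Out F_k$ with $\phi_{0*} = A$. Then every composition $\phi_0 \tau$ with $\tau \in \IA_k$ also maps to $A$, so it suffices to exhibit some $\tau \in \IA_k$ for which $\phi_0\tau$ is both hyperbolic and fully irreducible.

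To produce such a $\tau$, I would take a high power $\psi^n$ of a hyperbolic fully irreducible element $\psi \in \IA_k$, chosen with good dynamical properties. (Producing such a $\psi$ at all is already a nontrivial step and is where the hypothesis $k \geq 3$ is essential; as remarked in the introduction, no element of $\Out F_2$ is hyperbolic.) The natural setting for the dynamical argument is the action of $\Out F_k$ on the space of projectivized geodesic currents $\BP\mathrm{Curr}(F_k)$, where such a $\psi$ acts with north--south dynamics about a pair of currents $\mu_+^\psi, \mu_-^\psi$ that together ``fill'' the group. One then expects that for all sufficiently large $n$, the product $\phi_0 \psi^n$ inherits north--south dynamics on $\BP\mathrm{Curr}(F_k)$, with attracting current close to $\phi_{0*}\mu_+^\psi$ and repelling current close to $\mu_-^\psi$; the translation from this dynamical picture back to full irreducibility and hyperbolicity of $\phi_0\psi^n$ is standard once one knows the limiting currents are suitably nondegenerate.

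The main obstacle is verifying this nondegeneracy against an \emph{arbitrary} $\phi_0$. Specifically, one needs the limiting currents $\phi_{0*}\mu_+^\psi$ and $\mu_-^\psi$ to be projectively distinct (so north--south dynamics are genuine), not supported on any proper free factor (which yields full irreducibility), and not \emph{geometric} in the sense of being carried by a lamination on a once-punctured surface (which is what hyperbolicity requires, since as noted a fully irreducible but nonhyperbolic element has a power realized by a pseudo-Anosov on such a surface). Projective distinctness and non-support on a proper free factor ought to be generic conditions arranged by a suitable initial choice of $\psi$, but the geometricity condition is more delicate: it requires controlling the support of $\phi_{0*}\mu_+^\psi$ as $\phi_0$ ranges over all preimages of $A$. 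This is where the ``current twisting'' technology of the title presumably enters as the technical heart of the proof.
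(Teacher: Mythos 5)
Your skeleton matches the paper's: reduce modulo $\IA_k$, build a customized hyperbolic fully irreducible element $\phi$ in $\IA_k$, and show that a high power of $\phi$ composed with an arbitrary preimage of $A$ is still hyperbolic and fully irreducible. But there are two genuine gaps. First, the step you call ``standard'' --- passing from north--south dynamics on $\BP Curr(F_k)$ back to hyperbolicity \emph{and} full irreducibility of $\phi_0\psi^n$ --- is not standard and is not how the argument goes. The paper deliberately uses two different spaces: hyperbolicity comes from a Fathi-type argument on $Curr(F_k)$ (no periodic orbit, hence no periodic conjugacy class), under the hypothesis $[\psi\mu_+]\neq[\mu_-]$ and using the Kapovich--Lustig intersection form and unique ergodicity; full irreducibility comes from a separate argument on the Bestvina--Feighn $\delta$-hyperbolic complex (unbounded orbit forces no invariant free factor), under the hypothesis $[T_+\psi]\neq[T_-]$ in $\overline{CV}_k$, using local quasi-geodesics and WPD. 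Neither conclusion is extracted from north--south dynamics of the product on currents, and establishing such dynamics for the product is itself the hard part, not a consequence you can assume. Relatedly, the nondegeneracy conditions you list (the limit current not supported on a proper free factor, not geometric) are not the ones that enter; the conditions are the two projective inequalities above.

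Second, and more seriously, the claim that projective distinctness ``ought to be a generic condition arranged by a suitable initial choice of $\psi$'' is precisely the content you have not supplied: for an \emph{arbitrary} preimage $\phi_0$ of $A$, you have no handle on $\phi_{0*}\mu_+^{\psi}$ unless you know something concrete about $\mu_\pm^{\psi}$. The paper's resolution --- the ``current twisting'' of the title --- is to take $\phi=\delta_1^n\delta_2^{-n}$, a product of Dehn twists about filling very small cyclic trees with edge stabilizers $c_1,c_2$, and to prove (Theorem \ref{th:B}) that its stable and unstable currents converge to the explicit rational currents $[\eta_{c_1}]$ and $[\eta_{c_2}]$ as $n\to\infty$. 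The nondegeneracy condition then reduces to the checkable statement that $\psi(c_1)$ is not conjugate to $c_2$, which is arranged by choosing the second tree to be $T\theta^{\ell}$ for large $\ell$; the tree condition $[T_+^n\psi]\neq[T_-^n]$ then follows from the current condition via the intersection form. Without this explicit identification of the limiting currents, your genericity claim is an assertion, not an argument.
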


\begin{remark}\label{rm:k=2}
  For $k=2$, the function $\phi \mapsto \phi_*$ is an isomorphism and
  hence there are matrices $A \in \GL(2,\BZ)$ that are not represented
  by fully irreducible automorphisms.
\end{remark}

Papadopoulos relies on the characterization of pseudo-Anosov mapping
classes in terms of their dynamics on the Thurston boundary of
Teichm\"uller space.  The Teichm\"uller space for a surface $S_g$ is
the space of marked hyperbolic structures on $S_g$; Thurston
compactified Teichm\"uller space using the space of projectivized
measured laminations.  Pseudo-Anosovs are precisely the mapping
classes with exactly two fixed points in the compactified
Teichm\"uller space \cite{ar:T88}.  Using this characterization,
Papapodoloulos shows that if $f,h \in \MCG(S_g)$ where $f$ is
pseudo-Anosov and $f$ and $h$ satisfy an additional hypothesis, then
for large enough $m$, the mapping class $f^mh$ is pseudo-Anosov
\cite{Papadopoulos}.

Our approach for proving Theorem \ref{th:A} is similar to that of
Papadopoulos.  Namely, we show that if $\phi$ is hyperbolic and fully irreducible, and $\phi$ and $\psi \in \Out F_k$ satisfy a certain hypothesis, then for large enough $m$, the
element $\phi^m\psi$ is hyperbolic and fully irreducible (Propositions
\ref{prop:hyperbolic} and \ref{prop:fi}).  As such, one needs a space
where the dynamics of an element dictate its type, as with the action of a pseudo-Anosov 
on the Thurston boundary of Teichm\"uller space.

Since the properties of being hyperbolic and of being fully irreducible
are independent, it is perhaps of no surprise that two different
spaces are used in verifying each property for $\phi^m\psi$.  We
consider the action on the space of measured geodesic currents,
$Curr(F_k)$, as defined by Bonahon \cite{col:Bonahon91} (Section
\ref{ssc:currents}). This space is the completion of the space of
conjugacy classes for $F_k$, and thus is natural for testing
hyperbolicity.  We also consider a new complex defined by Bestvina and
Feighn for $\Out F_k$ that has the useful property of being
$\delta$--hyperbolic \cite{un:BF} (Section \ref{ssc:qm}).  Stabilizers
in $\Out F_k$ of conjugacy classes of proper free factors have bounded
orbits in this complex, and thus it provides a natural setting for
checking fully irreducibility.

Once we establish that $\phi^m\psi$ is a hyperbolic fully irreducible
element under a certain hypothesis, our problem is reduced to finding for
any $\psi \in \Out F_k$ a hyperbolic fully irreducible element $\phi
\in \IA_k$ which, together with $\psi$, satisfies the hypothesis.  To
build such elements we apply a construction from our earlier work
\cite{un:CP}; namely, we use \emph{Dehn twist automorphisms} to build
customized hyperbolic fully irreducible elements of $\Out F_k$.
Satisfying the hypothesis then requires that we understand the stable
and unstable currents in $\BP Curr(F_k)$ associated to a product of
Dehn twists.  This is our other main result, with definitions appearing in
Section \ref{sc:prelim}.

\begin{theoremB}
  Let $T_1$ and $T_2$ be very small cyclic trees that fill, with edge
  stabilizers $c_1$ and $c_2$, and with associated Dehn twist
  automorphisms $\delta_1$ and $\delta_2$. Let $N \geq 0$ be such that
  for $n \geq N$, we have that $\delta_1^n\delta_2^{-n}$ is a
  hyperbolic fully irreducible outer automorphism with stable and
  unstable currents $[\mu_+^n]$ and $[\mu_-^n]$ in $\BP Curr(F_k)$.
  Then: \[ \lim_{n \to \infty} [\mu_+^n] = [\eta_{c_1}] \mbox{ and }
  \lim_{n \to \infty} [\mu_-^n] = [\eta_{c_2}]. \]
\end{theoremB}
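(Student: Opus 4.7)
The plan is to show that $[\eta_{c_1}]$ is an ``approximate attracting fixed point'' for $\phi_n := \delta_1^n\delta_2^{-n}$ when $n$ is large, in the precise sense that $\phi_n$ maps arbitrarily small neighborhoods of $[\eta_{c_1}]$ strictly inside themselves. Since $\phi_n$ is hyperbolic fully irreducible, it acts on $\BP Curr(F_k)$ with North--South dynamics, so any such forward-invariant neighborhood must contain the unique attracting fixed point $[\mu_+^n]$. This will force $[\mu_+^n] \to [\eta_{c_1}]$, and the parallel assertion $[\mu_-^n] \to [\eta_{c_2}]$ will follow by applying the same argument to $\phi_n\inv = \delta_2^n\delta_1^{-n}$.

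The key technical input is a uniform version of the dynamics of a single Dehn twist on $\BP Curr(F_k)$: for each $i$, the current $[\eta_{c_i}]$ is $\delta_i$-fixed, and for any compact $K \subset \BP Curr(F_k)$ disjoint from the $\delta_i$-fixed locus, $\delta_i^{\pm n}[\mu] \to [\eta_{c_i}]$ uniformly over $[\mu] \in K$. The filling hypothesis on $T_1$ and $T_2$ is what makes this composable: it guarantees that $[\eta_{c_1}]$ is not $\delta_2$-fixed and $[\eta_{c_2}]$ is not $\delta_1$-fixed, so small neighborhoods of each remain disjoint from the opposite twist's fixed locus.

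With this input I would argue as follows. Fix a small open neighborhood $U$ of $[\eta_{c_1}]$ with $\ol U$ disjoint from both $[\eta_{c_2}]$ and the $\delta_2$-fixed locus. Choose a compact neighborhood $V$ of $[\eta_{c_2}]$ disjoint from the $\delta_1$-fixed locus. By uniform convergence of $\delta_1^n$ on $V$, there exists $N_1$ with $\delta_1^n(V) \subset U$ for $n \geq N_1$. By uniform convergence of $\delta_2^{-n}$ on $\ol U$, there exists $N_2$ with $\delta_2^{-n}(\ol U) \subset V$ for $n \geq N_2$. For $n \geq \max(N_1, N_2)$ we obtain $\phi_n(\ol U) \subset U$. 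Picking any $[\mu] \in U$ with $[\mu] \neq [\mu_-^n]$, the North--South dynamics give $\phi_n^k[\mu] \to [\mu_+^n]$ with every iterate in $\ol U$, so $[\mu_+^n] \in \ol U$. Since $U$ may be shrunk arbitrarily, $[\mu_+^n] \to [\eta_{c_1}]$, as desired.

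The main obstacle I expect is establishing the uniform convergence claim for a single Dehn twist. Pointwise convergence reflects the familiar principle that iterating a Dehn twist along an edge with stabilizer $c_i$ eventually forces the pairing of $\delta_i^n[\mu]$ with any fixed test cylinder to be dominated by a contribution growing linearly in $n$ and proportional to the corresponding pairing with $\eta_{c_i}$. The uniform statement amounts to controlling the error in this asymptotic uniformly in $[\mu]$ varying over a compact set bounded away from the $\delta_i$-fixed locus, and should follow from explicit cylinder estimates developed in the preliminary section on currents. Once this uniformity is in place, the ping-pong composition above is essentially formal, and the argument for $[\mu_-^n]$ is obtained by interchanging the roles of $(\delta_1, c_1)$ and $(\delta_2, c_2)$.
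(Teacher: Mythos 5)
Your overall strategy is sound in outline and genuinely different from the paper's, but it rests on a lemma that you assert rather than prove, and that lemma is where essentially all of the content of the theorem lives. You need: for each $i$, $\delta_i^{\pm n}[\mu] \to [\eta_{c_i}]$ \emph{uniformly} over compact subsets of $\BP Curr(F_k)$ avoiding the ``bad'' locus of $\delta_i$. Two problems. First, the relevant bad locus is not the fixed-point set of $\delta_i$ but the zero set $\{[\mu] : \la T_i,\mu\ra = 0\}$ of the Kapovich--Lustig intersection form; that is the closed set on which the twisting estimates degenerate, and it is what the filling hypothesis lets you avoid (filling gives $\ell_{T_2}(c_1)>0$ and $\ell_{T_1}(c_2)>0$, since each $c_i$ is elliptic in its own tree). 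Second, and more seriously, no such uniform single-twist dynamics statement is available off the shelf: the cylinder estimates one can actually prove (Lemma \ref{lm:inequalites}) are stated for conjugacy classes, i.e.\ counting currents $\eta_x$, and their error terms involve $\la c_1^{\pm 1},x\ra_{\CT_1}$ and $\ell_{\CT_1}(x)$, which are not uniformly controlled by $\ell_{T_1}(x)$; the resulting rate of convergence of $\delta_1^n[\eta_x]$ to $[\eta_{c_1}]$ degenerates as $\ell_{\CT_1}(x)/\ell_{T_1}(x)\to\infty$, i.e.\ precisely as $[\eta_x]$ approaches the zero locus. Upgrading this to a uniform statement on a compact set $K$ with $\la T_1,\cdot\ra/\omega_{\CT_1}(\cdot)$ bounded below, and then extending from counting currents to arbitrary currents in $K$ (a density-plus-equicontinuity argument, using that $\BP Curr(F_k)$ is compact metrizable and that neighborhoods are cut out by finitely many normalized cylinder conditions as in Lemma \ref{lm:convergence}), is a nontrivial piece of work that your proposal defers entirely to ``should follow from explicit cylinder estimates.'' As written, the proof is incomplete at its central step.

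For contrast, the paper's argument avoids any dynamics statement for a single twist on general currents. It fixes one conjugacy class $\alpha$ with $\ell_{T_1}(\alpha)=0$, uses the known North--South dynamics of the composite $\phi_n=\delta_1^n\delta_2^{-n}$ (Theorem \ref{th:NS-curr}) to approximate $[\mu_+^n]$ by the counting currents $[\eta_{\phi_n^m(\alpha)}]$, and then applies the twisting estimates to a single further application of $\phi_n$ to $\alpha_n^m = \phi_n^m(\alpha)$. The crucial uniformity in $m$ there comes for free from Kapovich--Lustig's comparison $\ell_{\CT_1}(\alpha_n^m) \leq 2K\ell_{T_2}(\alpha_n^m)$, so every error term is absorbed into a factor $\ell_{T_2}(\alpha_n^m)\cdot O(n)$ against a main term $\ell_{T_2}(\alpha_n^m)\cdot\Omega(n^2)$. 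If you want to salvage your ping-pong argument, the honest route is to prove the uniform single-twist lemma along the lines sketched above; otherwise you should restructure so that the only currents you ever twist are the counting currents in a single $\phi_n$-orbit, where the needed uniformity can be certified.
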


\bigskip \noindent {\bf Acknowledgements.}  We would like to thank Mladen
Bestvina for fielding several questions concerning this project, as well as 
Juan Souto for having suggested it as an application of our construction of hyperbolic fully irreducible outer automorphisms. We are also grateful to the referee for thoughtful and interesting suggestions concerning our results.


\section{Preliminaries}\label{sc:prelim}


\subsection{Bounded cancellation}\label{ssc:BCC}

When working with free groups, the following lemma due to Cooper is
indispensable.  For a basis $\CA$, let $|x|_\CA$ denote the word
length of $x \in F_k$ with respect to $\CA$ and $\ell_\CA(x)$ the
length of the cyclic word determined by $x$.

\begin{lemma}[\cite{Cooper}, Bounded cancellation lemma]\label{lm:BCC}
  Suppose $\CA$ and $\CB$ are bases for the free group $F_k$.
  There is a constant $C = C(\CA,\CB)$ such that if $w$ and $w'$
  are two elements of $F_k$, where:
  \[|w|_{\CA} + |w'|_{\CA} = |ww'|_{\CA}\]
  then
  \[|w|_{\CB} + |w'|_{\CB} - |ww'|_{\CB} \leq 2C.\]
\end{lemma}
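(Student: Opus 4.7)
The plan is to pass to the universal covers of the two roses, namely the Cayley trees $T_\CA$ and $T_\CB$ of $F_k$ with respect to $\CA$ and $\CB$; both admit free simplicial $F_k$-actions. Let $v_0$ denote the basepoint (vertex corresponding to $1 \in F_k$) in each tree, and fix an $F_k$-equivariant combinatorial map $\tilde f \co T_\CA \to T_\CB$ that sends each oriented $\CA$-edge to the reduced $\CB$-edge-path spelling the corresponding generator. Set $L = \max_{a \in \CA} |a|_\CB$; the target is to show one may take $C = L$.

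The first step is the standard tree identity $d(x,y) + d(y,z) - d(x,z) = 2\, d(y,[x,z])$, applied in $T_\CB$ with $x=v_0$, $y=wv_0$, $z=ww'v_0$: this rewrites
\[
|w|_\CB + |w'|_\CB - |ww'|_\CB = 2\, d_{T_\CB}\bigl(wv_0,\, [v_0, ww'v_0]\bigr),
\]
so the lemma reduces to bounding the right-hand distance by $L$. The $\CA$-reducedness hypothesis forces the unique reduced $\CA$-edge-path $\gamma$ from $v_0$ to $ww'v_0$ in $T_\CA$ to visit $wv_0$ at an interior vertex. Applying $\tilde f$ yields a (generally unreduced) edge-path in $T_\CB$ from $v_0$ to $ww'v_0$ still visiting $wv_0$; its tightening in the tree is the geodesic $[v_0, ww'v_0]$. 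Thus the distance we wish to bound is precisely the length of the backtrack of $\tilde f(\gamma)$ at the interior vertex $wv_0$.

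The main step, and the main obstacle, is to show that this backtrack cannot exceed $L$. Locally near $wv_0$, $\tilde f(\gamma)$ is the concatenation of two edge-image blocks: the incoming $\tilde f(a_m)$ (image of the last $\CA$-letter of $w$) and the outgoing $w\tilde f(b_1)$ (image of the first $\CA$-letter of $w'$), each of length at most $L$. Any backtrack of length at most $L$ is absorbed within these two adjacent blocks and is trivially controlled. A longer, cascading backtrack would force the entire incoming block to be retraced (so that $\tilde f(b_1)$ begins with $\tilde f(a_m^{-1})$) and the matching to continue past the vertex $wa_m^{-1}v_0$ into the previous incoming block $\tilde f(a_{m-1})^{-1}$ on one side and the remainder of $\tilde f(b_1)$ on the other. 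Tracing the implications, a cascade of this type couples the single $\CA$-letter $b_1$ (viewed as an element of $F_k$) with a reduced $\CA$-word of length at least two read off from the reversed $\CA$-letters $a_m^{-1} a_{m-1}^{-1} \cdots$ of $w$; since $F_k$ acts freely on $T_\CB$, this forces $|b_1|_\CA \geq 2$, contradicting $|b_1|_\CA = 1$. The degenerate length-one case is exactly $b_1 = a_m^{-1}$, which is excluded by the hypothesis $|ww'|_\CA = |w|_\CA + |w'|_\CA$. This contradiction gives the bound $C = L$, completing the plan.
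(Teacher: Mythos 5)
The paper does not prove this lemma---it is quoted from Cooper---so your proposal must stand on its own, and it does not. Your setup is fine: passing to the Cayley trees, the identity $|w|_\CB + |w'|_\CB - |ww'|_\CB = 2\,d_{T_\CB}(wv_0,[v_0,ww'v_0])$, and the observation that $\tilde f(\gamma)$ is a path through $wv_0$ are all correct and are the standard first steps. The gap is in the main step. First, the quantity $d_{T_\CB}(wv_0,[v_0,ww'v_0])$ is \emph{not} ``the backtrack of $\tilde f(\gamma)$ at the interior vertex $wv_0$'': the path $\tilde f(\gamma)$ backtracks at every intermediate vertex $a_1\cdots a_iv_0$, and the terminal segment of the reduced $\CB$-word of $w$ (which is what cancels against $w'$) may be contributed by $\CA$-letters far from the end of $w$ after all that internal reduction. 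So the ``local picture'' of the two adjacent blocks $\tilde f(a_m)$ and $w\tilde f(b_1)$ does not control the distance you need to bound. Second, your dismissal of the cascading case is a non sequitur. If the cancellation consumes all of $\tilde f(a_m)$ and continues into $\tilde f(a_{m-1})$, the only conclusion is a statement about how the reduced $\CB$-spelling of the single letter $b_1$ overlaps the $\CB$-spellings of $a_m^{-1},a_{m-1}^{-1},\dots$; no group element is being represented by two distinct reduced words, so freeness of the action on $T_\CB$ yields no contradiction, and nothing forces $|b_1|_\CA\geq 2$. Concretely, take $\CA=\{a,b\}$, $\CB=\{x,y\}$ with $x=a$, $y=a^Nb$, so $b=x^{-N}y$, and set $w=a^N$, $w'=b$: here $ww'$ is reduced in $\CA$, yet the cancellation between $\tilde f(w)=x^N$ and $\tilde f(w')=x^{-N}y$ cascades through all $N$ incoming blocks back to $v_0$. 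Your argument would declare this impossible; it plainly happens. (In this example the cancellation $N$ happens to be at most $L=N+1$, but that is luck, not a consequence of your reasoning.)

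The standard way to close the gap is quasi-geodesic stability. The vertex sequence $v_0, a_1v_0, a_1a_2v_0,\dots$ of $\gamma$ satisfies $\tfrac{1}{L'}|i-j|\leq d_{T_\CB}(g_iv_0,g_jv_0)\leq L|i-j|$, where $L'=\max_{x\in\CB}|x|_\CA$, so it is a $(\max\{L,L'\},0)$-quasi-geodesic in the $0$-hyperbolic space $T_\CB$; by stability of quasi-geodesics every point of it, in particular $wv_0$, lies within a uniform distance $R=R(\max\{L,L'\})$ of $[v_0,ww'v_0]$. Note that the resulting constant depends on \emph{both} change-of-basis directions, not on $L$ alone; your argument gives no mechanism for $L'$ to enter, which is another symptom that the local analysis is not capturing the true phenomenon. (An alternative correct route is via Stallings folds, as in Cooper's original paper.)
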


\noindent We denote by $BCC(\CA,\CB)$ the bounded cancellation
constant; that is, the minimal constant $C$ satisfying the lemma for
$\CA$ and $\CB$.  In other words, if $ww'$ is a reduced word in $\CA$,
and we can write $w = \prod_{i=1}^m x_i$ and $w' =
\prod_{i=1}^{m'}x'_i$ where $x_i,x'_i \in \CB$, then for $C =
BCC(\CA,\CB)$ the subwords $x_1 \cdots x_{m-C-1}$ and $x'_{C+1}\cdots
x'_{m'}$ appear as subwords of $ww'$ when considered as a word in
$\CB$.  Applying the bounded cancellation lemma to $w^2$ where $w$ is
a cyclically reduced word with respect to $\CA$, we see that $w$ is
``almost cyclically reduced'' with respect to $\CB$, i.e., $w =
zxz^{-1}$ where $x$ is cyclically reduced with respect to $\CB$ and
$|z|_\CB \leq BCC(\CA,\CB)$.


\subsection{Culler--Vogtmann Outer space}\label{ssc:outer}

Equally indispensable to the study of $\Out F_k$ is the
\emph{Culler--Vogtmann Outer space} $CV_k$ \cite{Culler-Vogtmann}.
This is the projectivized space of minimal discrete free actions of
$F_k$ on $\BR$--trees and is analogous to the Teichm\"uller space for
a surface.  There is a compactification $\overline{CV}_k$
\cite{Culler-Morgan} that is precisely the projectivized space of
minimal very small actions of $F_k$ on $\BR$--trees
\cite{Bestvina-Feighn,Cohen-Lustig}.  Recall that an action on an
$\BR$--tree is minimal if there is no invariant subtree; it is very
small if the stabilizer of an arc is either trivial or a maximal
cyclic subgroup, and if the stabilizer of any tripod is trivial.  We
consider the unprojectivized versions $cv_k$ and $\overline{cv}_k$ as
well.

The group $\Out F_k$ acts on either of the above spaces on the right
by pre-composing the action homomorphism.  Fully irreducible elements
act on $\overline{CV}_k$ with North-South dynamics.

\begin{theorem}[\cite{Levitt-Lustig}, Theorem~1.1]\label{th:NS-cv}
  Every fully irreducible element $\phi \in \Out F_k$ acts on
  $\overline{CV}_k$ with exactly two fixed points $[T_+]$ and $[T_-]$.
  Further, for any $[T] \in \overline{CV}_k$ such that $[T] \neq
  [T_-]$:
  \[ \lim_{m \to \infty} [T\phi^m] = [T_+]. \]
\end{theorem}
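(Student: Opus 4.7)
The plan is to first construct the fixed points $[T_+]$ and $[T_-]$ from train-track dynamics, and then deduce convergence via length functions, bounded cancellation, and the uniqueness of the tree dual to the attracting lamination.

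To build the fixed points I would take a Bestvina--Handel train-track representative $f \co G \to G$ of $\phi$ with Perron--Frobenius transition eigenvalue $\lambda > 1$. Weight the edges of $G$ by the PF eigenvector so that $f$ multiplies all edge lengths by $\lambda$. Lift to the universal cover $\tilde G$, rescale its metric by $\lambda^{-m}$, and pass to an equivariant Gromov--Hausdorff limit (via a compactness theorem of Paulin) to obtain a very small $F_k$--tree $T_+$ with $T_+ \phi = \lambda T_+$, so that $[T_+] \in \overline{CV}_k$ is fixed by $\phi$. Applying the same procedure to a train-track representative of $\phi^{-1}$ (with PF eigenvalue $\lambda'>1$) produces $[T_-]$, projectively fixed by $\phi^{-1}$ (hence by $\phi$) with $T_-\phi = (\lambda')^{-1} T_-$. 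Distinctness of $[T_+]$ and $[T_-]$ follows because $\phi$ scales translation lengths on $T_+$ by $\lambda > 1$ and on $T_-$ by $(\lambda')^{-1} < 1$.

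For convergence, recall that a point of $\overline{CV}_k$ is determined projectively by its translation length function on conjugacy classes. Given $[T] \neq [T_-]$, I would show that the length functions $\ell_{T\phi^m} = \ell_T \circ \phi^m$ converge, after rescaling, to a positive multiple of $\ell_{T_+}$. For each $g \in F_k$, the iterate $f^m(g)$ is a reduced edge-path in $G$, and a Perron--Frobenius analysis of the transition matrix shows its edge-counts grow like $\lambda^m$ times the PF eigenvector, with proportionality constant $\ell_{T_+}(g)$. Combining this with the bounded cancellation lemma (Lemma~\ref{lm:BCC}), applied to the map $G \to T$ realizing the marking on $T$, yields an asymptotic
\[ \ell_T(\phi^m(g)) = c(T)\cdot \lambda^m \cdot \ell_{T_+}(g) + o(\lambda^m) \]
for some constant $c(T) \geq 0$ depending only on $T$. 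Dividing by $\lambda^m$ gives $[T\phi^m] \to [T_+]$ whenever $c(T) > 0$.

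The main obstacle is proving $c(T) > 0$ for every $[T] \neq [T_-]$. The quantity $c(T)$ should be interpreted as a pairing of $T$ with the attracting lamination $\Lambda^+$ of $\phi$, vanishing precisely when $T$ collapses every leaf of $\Lambda^+$ to a point. By results on very small $F_k$--trees dual to attracting laminations (Bestvina--Feighn--Handel; later Coulbois--Hilion--Lustig), the unique tree in $\overline{CV}_k$ on which $\Lambda^+$ has zero length is $T_-$ itself. Hence $c(T) = 0$ exactly when $[T] = [T_-]$, and under the hypothesis $[T]\neq [T_-]$ the convergence $[T\phi^m]\to [T_+]$ follows. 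The same argument applied to $\phi^{-1}$ gives the complementary attraction to $[T_-]$, so $\phi$ acts on $\overline{CV}_k$ with North--South dynamics.
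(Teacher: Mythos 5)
The paper does not prove this statement; it is quoted directly from Levitt--Lustig, so the only possible comparison is with their argument. Your outline has the same architecture as the proofs in the literature (fixed points from train tracks and rescaled equivariant limits; convergence via projectivized length functions; the exceptional set identified as the zero locus of a pairing with the attracting lamination), and the easy parts are fine: the construction and distinctness of $[T_+]$ and $[T_-]$, and the fact that ``exactly two fixed points'' follows once the convergence statement is known. But the two load-bearing steps have genuine gaps.

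First, the asymptotic $\ell_T(\phi^m(g)) = c(T)\lambda^m\ell_{T_+}(g)+o(\lambda^m)$ does not follow from bounded cancellation applied to a map $G\to T$. Bounded cancellation controls the backtracking in $T$ at each vertex crossed by the tightened edge path $f^m(g)$, but that path crosses on the order of $\lambda^m$ vertices, so the total loss is a priori $O(\lambda^m)$ --- the same order as the main term --- and the resulting lower bound on $\ell_T(\phi^m(g))$ is vacuous. (Train tracks control cancellation under iteration of $f$ inside $G$, since the number of illegal turns does not grow, but they say nothing about cancellation under an arbitrary equivariant map into $T$.) Proving that the limit $c(T)$ exists, is independent of $g$, and factors as a pairing of $T$ with the attracting lamination is essentially the content of Theorems \ref{th:intersection} and \ref{th:NS-curr} of this paper, or of Levitt--Lustig's $\mathcal{Q}$-map analysis; it is a result of comparable depth to the theorem you are proving. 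Second, your justification of ``$c(T)=0$ iff $[T]=[T_-]$'' is circular as cited: this is exactly the unique ergodicity statement, Theorem \ref{th:ergodic} (Kapovich--Lustig), and its proof --- like the Coulbois--Hilion--Lustig results you invoke --- takes Levitt--Lustig's North--South dynamics on $\overline{CV}_k$ as an input. An independent proof that the pairing with $\Lambda^+$ vanishes only at $[T_-]$ is precisely where the hard work in Levitt--Lustig lives, and it is the missing core of your argument.
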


The trees $[T_+]$ and $[T_-]$ are called the \emph{stable} and
\emph{unstable} trees of $\phi$ respectively.  The stable and unstable
trees of $\phi^{-1}$ are $[T_-]$ and $[T_+]$, respectively.


\subsection{Dehn twists}\label{ssc:dt}

As mentioned in the introduction, we build customized hyperbolic fully
irreducible elements of $\Out F_k$ using Dehn twist automorphisms.
These are defined analogously to a Dehn twist homeomorphism of a
surface.  Specifically, given a splitting $F_k = A*_{\la c \ra} B$, we
define an automorphism by:
\begin{align*}
  \forall a \in A \qquad & \delta(a) = a \\
  \forall b \in B \qquad & \delta(b) = cbc^{-1}.
\end{align*}
The automorphism $\delta$ acts trivially on homology and therefore
belongs to the subgroup $\IA_k$. A Dehn twist automorphism arising
from amalgamations over $\BZ$ is analogous to a Dehn
twist around a separating simple closed curve on a surface.

We similarly obtain an automorphism $\delta$ from an HNN-extension of
the form
$$F_k = A *_\BZ = \la A, t  \ | \ t^{-1}a_0t = a_1 \ra$$
for $a_0, a_1 \in A$ by:
\begin{align*}
  \forall a \in A \qquad & \delta(a) = a \\
  & \delta(t) = a_0t.
\end{align*}
An automorphism arising from an HNN-extension should be compared to a
Dehn twist around a nonseparating curve on a surface.

From Bass-Serre theory, a splitting of $F_k$ over $\BZ$ defines an
action of $F_k$ on a tree $T$, the {\it Bass-Serre tree} of the
splitting (see \cite{Bass} or \cite{Serre}). We will refer to such
$F_k$-trees as {\it cyclic}.  In a
certain sense, cyclic trees for $F_k$ correspond to simple closed
curves on a surface; as in the mapping class group, the Dehn twist
automorphisms associated to cyclic trees generate an index two
subgroup of $\Aut F_k$ (the subgroup which induces an action of
$\SL_k(\BZ)$ on homology). Note that if $\delta$ is the Dehn twist
automorphism associated to the cyclic tree $T$, then $\delta$
preserves the action of $F_k$ on $T$, i.e.,~there is an isometry
$h_\delta\co T \to T$ such that $\forall g \in F_k$ and $\forall x \in
T$ we have $h_\delta(gx) = \delta(g)h_\delta(x)$.  In particular,
$\ell_T(\delta(g)) = \ell_T(g)$ for all $g \in F_k$.

We are primarily interested in the {\it outer} automorphism group of
$F_k$, and so in the sequel a Dehn twist will refer to an element of
$\Out F_k$ which is induced by a Dehn twist automorphism in $\Aut
F_k$.

The role of intersection number of simple closed curves is played by
\emph{free volume}.

\begin{definition}[Free volume]\label{defi:volume}
  Suppose $X$ is a finitely generated free group that acts on a
  simplicial tree $T$ such that the stabilizer of an edge is either
  trivial or cyclic. The \emph{free volume $\vol_T(X)$} of $X$ with
  respect to $T$ is the number of edges in the graph of groups
  decomposition $T^X/X$ with trivial stabilizer.  Here $T^X$ denotes
  the smallest $X$--invariant subtree.
\end{definition}
\noindent In the case that $X = \la x \ra$, the free volume $\vol_T(X)$ is just the 
\emph{translation length} $\ell_T(x)$ of $x$ in $T$.  

We say two cyclic trees
\emph{fill} if:
\begin{equation*}\label{eq:F1}
  \vol_{T_1}(X) + \vol_{T_2}(X) > 0
\end{equation*}
for every proper free factor or cyclic subgroup $X \subset F_k$.  With
these notions we have shown the following analog to a classical
theorem of Thurston:
\begin{theorem}[\cite{un:CP}, Theorem~5.3]\label{th:twist}
  Let $\delta_1$ and $\delta_2$ be the Dehn twist automorphisms of
  $F_k$ for two filling cyclic trees of $F_k$.  Then there
  exists $N = N(\delta_1,\delta_2)$ such that for all $m,n \geq N$:
  \begin{enumerate}
  \item $\la \delta_1^m, \delta_2^n \ra$ is isomorphic to the free
    group on two generators; and
  \item if $\phi \in \la \delta_1^m, \delta_2^n \ra$ is not conjugate
    to a power of either $\delta_1^m$ or $\delta_2^n$, then $\phi$ is
    a hyperbolic fully irreducible element of $\Out F_k$.
  \end{enumerate}
\end{theorem}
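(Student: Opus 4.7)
The plan is to combine the combinatorial action of Dehn twists on counting currents with the North--South dynamics of fully irreducible elements on $\BP Curr(F_k)$. The key heuristic is that, for large $n$, $[\eta_{c_1}]$ behaves as an approximate fixed point of $\delta_1^n\delta_2^{-n}$, which should force the true attracting fixed current $[\mu_+^n]$ to converge to $[\eta_{c_1}]$; the argument for $[\mu_-^n] \to [\eta_{c_2}]$ is then symmetric via the inverse $\delta_2^n\delta_1^{-n}$.

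I would first establish a single-Dehn-twist statement: for a Dehn twist $\delta$ along a cyclic splitting with edge stabilizer $c$ and Bass--Serre tree $T$, and for any $[\mu] \in \BP Curr(F_k)$ whose support is not contained in the subgroup structure of the splitting, $\delta^n[\mu] \to [\eta_c]$. For a counting current $[\eta_g]$ with $g$ of positive $T$-translation length, the word $\delta^n(g)$ in the splitting's normal form inserts $c^{\pm n}$ at each of the $\ell_T(g)$ syllable transitions, so cyclic-word subword counts of any fixed bounded length are asymptotically dominated by subwords of $c^{\pm n}$. Density of rational currents in $\BP Curr(F_k)$ and continuity of $\delta$ on $Curr(F_k)$ extend this to the general case. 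Next, the filling hypothesis supplies the necessary genericity: taking $X = \la c_j \ra$ gives $\vol_{T_j}(X) = 0$ and hence $\ell_{T_i}(c_j) = \vol_{T_i}(X) > 0$ for $i \neq j$, and the free-factor version rules out the other exceptional cases in the previous step. Therefore $\delta_1^n[\eta_{c_2}] \to [\eta_{c_1}]$ and $\delta_2^{-n}[\eta_{c_1}] \to [\eta_{c_2}]$, and chaining the two yields, for any generic $[\mu_0]$,
\[
(\delta_1^n\delta_2^{-n})[\mu_0] \to [\eta_{c_1}] \quad \text{as } n \to \infty,
\]
and, by iterating for each fixed $k \geq 1$, $(\delta_1^n\delta_2^{-n})^k[\mu_0] \to [\eta_{c_1}]$.

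To promote this to the true stable current I would use Theorem~\ref{th:twist} together with the North--South dynamics of hyperbolic fully irreducible elements on $\BP Curr(F_k)$ (due to Martin and Kapovich--Lustig) to identify $[\mu_+^n]$ as the unique attracting fixed current, with $(\delta_1^n\delta_2^{-n})^k[\mu_0]\to [\mu_+^n]$ as $k \to \infty$ for any generic $[\mu_0]$. Pass to a convergent subsequence $[\mu_+^{n_j}] \to [\mu_\infty]$ using compactness of $\BP Curr(F_k)$, and diagonalize: for each $j$ choose $k_j$ so large that $(\delta_1^{n_j}\delta_2^{-n_j})^{k_j}[\mu_0]$ is within $1/j$ of $[\mu_+^{n_j}]$. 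The diagonal sequence then also converges to $[\mu_\infty]$, while by the iterated form of the previous step it ought to limit to $[\eta_{c_1}]$, forcing $[\mu_\infty] = [\eta_{c_1}]$. The argument for $[\mu_-^n] \to [\eta_{c_2}]$ repeats this with $(\delta_1^n\delta_2^{-n})^{-1} = \delta_2^n\delta_1^{-n}$ in place of $\delta_1^n\delta_2^{-n}$.

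The main obstacle is making the diagonalization precise: the single-step convergence $(\delta_1^n\delta_2^{-n})[\mu_0]\to[\eta_{c_1}]$ must be upgraded to a statement uniform enough in $k$ that one can let $k_n\to\infty$ with $n$ without losing the limit. The natural tool is the Kapovich--Lustig intersection pairing $i\co\overline{cv}_k\times Curr(F_k)\to[0,\infty)$ evaluated against the test trees $T_1$ and $T_2$: using the invariance $T_i\cdot\delta_i = T_i$, the covariance $i(T\phi,\mu\phi)=i(T,\mu)$ turns the eigencurrent equation for $\mu_+^n$ into an asymptotic relation linking $\la T_i,\mu_+^n\ra$, $\la T_i\cdot \delta_{3-i}^{\mp n},\mu_+^n\ra$, and the stretch factor $\lambda_n$ of $\delta_1^n\delta_2^{-n}$ on $\mu_+^n$. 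Carefully tracking the leading-order behavior of these pairings---in particular near the direction where $\la T_i,\eta_{c_i}\ra=0$---is what pins down $\lim [\mu_+^n]=[\eta_{c_1}]$ uniquely, and is the principal computation that the proof must carry out.
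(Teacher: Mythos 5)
There is a genuine gap: your proposal does not address the statement at hand. Theorem \ref{th:twist} asserts two things --- that $\la \delta_1^m,\delta_2^n \ra$ is free of rank two for all large $m,n$, and that every element of this subgroup not conjugate to a power of $\delta_1^m$ or $\delta_2^n$ is hyperbolic and fully irreducible --- and neither appears anywhere in your argument. What you outline instead is the convergence $[\mu_+^n]\to[\eta_{c_1}]$, $[\mu_-^n]\to[\eta_{c_2}]$ of the stable and unstable currents of the particular elements $\delta_1^n\delta_2^{-n}$, which is Theorem \ref{th:B} of this paper, a logically downstream result. Worse, your argument is circular relative to the task: you explicitly invoke ``Theorem~\ref{th:twist} together with the North--South dynamics'' to know that $\delta_1^n\delta_2^{-n}$ is hyperbolic and fully irreducible and hence has a well-defined attracting current $[\mu_+^n]$. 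One cannot prove that these elements (let alone arbitrary non-peripheral elements of $\la\delta_1^m,\delta_2^n\ra$) are hyperbolic fully irreducible by studying their stable currents, since the existence and uniqueness of those currents is exactly what hyperbolicity plus full irreducibility provides.

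A proof of Theorem \ref{th:twist} has to run along quite different lines (in the paper it is quoted from \cite{un:CP}, Theorem~5.3, where it is proved): a ping-pong argument using the two trees $T_1,T_2$ and the linear growth estimates of Theorem \ref{th:linear} gives both the freeness of $\la\delta_1^m,\delta_2^n\ra$ and, for any $\phi$ in this group not conjugate into $\la\delta_1^m\ra$ or $\la\delta_2^n\ra$, the unbounded growth of $\ell_{T_1}+\ell_{T_2}$ (respectively of the free volumes $\vol_{T_1}+\vol_{T_2}$) along $\phi$-orbits of any nontrivial cyclic subgroup (respectively any proper free factor); the filling hypothesis is what guarantees these quantities are positive to begin with, and their growth rules out periodic conjugacy classes of elements (hyperbolicity) and of free factors (full irreducibility). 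If you instead intend your write-up as an alternative proof of Theorem \ref{th:B}, note that the paper's proof proceeds by quantitative subword-counting estimates (Lemma \ref{lm:inequalites} together with Theorem \ref{th:linear}) applied along the orbit $\phi_n^m(\alpha)$, which yields the convergence directly via Lemma \ref{lm:convergence}; your approximate-fixed-point-plus-diagonalization scheme would still need the uniformity in $m$ that you flag as the ``main obstacle,'' and that uniformity is precisely what those explicit estimates supply.
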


Key to our analysis in \cite{un:CP} and Section \ref{sc:stable} of the
present paper is the following theorem, which measures how the free
volume changes upon twisting.

\begin{theorem}[\cite{un:CP}, Theorem~4.6]\label{th:linear}
  Let $\delta_2$ be a Dehn twist automorphism corresponding to a 
  very small cyclic tree $T_2$ with cyclic edge generator $c_2$, and let
  $T_1$ be any other very small cyclic tree.  Then there is a constant
  $C = C(T_1,T_2)$ such that for any $x \in F_k$ and $n \geq 0$ the
  following hold.
  \begin{align}
    \ell_{T_1}(\delta_2^{\pm n}(x)) & \geq
    \ell_{T_2}(x)\bigl[n\ell_{T_1}(c_2) - C \bigr] - \ell_{T_1}(x)
    \label{al:twist-lb} \\
    \ell_{T_1}(\delta_2^{\pm n}(x)) & \leq
    \ell_{T_2}(x)\bigl[n\ell_{T_1}(c_2) + C \bigr] + \ell_{T_1}(x)
    \label{al:twist-ub}
  \end{align}
\end{theorem}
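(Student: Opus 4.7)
My plan is to work in the Bass-Serre decomposition of $F_k$ determined by $T_2$, treating the amalgamated case $F_k = A *_{\la c_2 \ra} B$ in detail; the HNN case is handled similarly. First, replace $x$ by a cyclically reduced conjugate and write it in alternating Bass-Serre normal form $x = g_1 g_2 \cdots g_k$, with the $g_i$ alternately in $A \setminus \la c_2 \ra$ and $B \setminus \la c_2 \ra$ and $k = \ell_{T_2}(x)$. If $k = 0$ then $x$ is conjugate into a vertex group and $\delta_2^{\pm n}$ acts on $x$ by an inner automorphism, so both inequalities are trivially absorbed by the $\ell_{T_1}(x)$ term on the right. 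In the interesting case $k \geq 1$, the defining formulas for $\delta_2$ yield the explicit expression
\[ \delta_2^{\pm n}(x) = g_1\, c_2^{\pm n}\, g_2\, c_2^{\mp n}\, g_3\, c_2^{\pm n}\, g_4 \cdots g_k\, c_2^{\mp n}, \]
which contains exactly $k = \ell_{T_2}(x)$ occurrences of $c_2^{\pm n}$ interleaved with the $g_i$.

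For the upper bound \eqref{al:twist-ub}, I would choose a basepoint $p$ in $T_1$ on (or at bounded distance from) the characteristic set of $c_2$. Applying the triangle inequality to $d_{T_1}(p, \delta_2^{\pm n}(x)\cdot p)$ produces $k$ summands of the form $d_{T_1}(p, c_2^{\pm n} p) = n \ell_{T_1}(c_2) + O(1)$, together with summands of the form $d_{T_1}(p, g_i p)$ whose total I can bound by $\ell_{T_1}(x) + k \cdot O(1)$ via a Cooper-style bounded-cancellation estimate (cf.\ Lemma \ref{lm:BCC}) applied to an $F_k$-equivariant map relating $T_1$ and $T_2$. Collecting terms gives the claimed upper bound with a constant depending only on $T_1$ and $T_2$.

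The lower bound \eqref{al:twist-lb} is the main obstacle, since a priori the $k$ insertions of $c_2^{\pm n}$ could cancel against one another. The crucial input is that $T_1$ is very small, so its arc stabilizers are trivial or maximal cyclic; since each $g_i$ lies outside $\la c_2 \ra$, the characteristic sets of $c_2$ and of $g_i c_2 g_i^{-1}$ in $T_1$ intersect in a subset of bounded diameter $D = D(T_1)$ (otherwise $g_i$ would commute with a power of $c_2$ and hence lie in $\la c_2 \ra$). Consequently, for every $n$ the axis of $\delta_2^{\pm n}(x)$ in $T_1$ threads through $k$ essentially disjoint subsegments, one per conjugate axis of $c_2$, each of length at least $n \ell_{T_1}(c_2) - O(1)$. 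Summing these contributions and subtracting a uniformly bounded correction at each of the $k$ transitions --- with aggregate error at most $\ell_{T_1}(x) + kC$ --- produces \eqref{al:twist-lb} with the same constant $C = C(T_1, T_2)$. The no-cancellation analysis in very small trees is the step I expect to require the most care.
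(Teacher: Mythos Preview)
The paper does not prove Theorem~\ref{th:linear} here; it is quoted from the authors' companion paper \cite{un:CP} (Theorem~4.6 there) and used as a black box, so there is no in-paper argument to compare against. Your outline is nonetheless very much in the spirit of the explicit normal-form computations that the present paper \emph{does} carry out for the closely related word-length estimates in Lemma~\ref{lm:inequalites}.

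Your sketch is essentially sound, and you correctly identify the lower bound as the substantive part. Two points deserve more care. First, the diameter bound $D$ on the overlap of the characteristic sets of $c_2$ and $g_i c_2 g_i^{-1}$ in $T_1$ depends on $\ell_{T_1}(c_2)$, hence on both trees, not on $T_1$ alone --- harmless, since the theorem permits $C = C(T_1,T_2)$, but worth stating correctly. Second, your parenthetical justification for that overlap bound (``otherwise $g_i$ would commute with a power of $c_2$'') is too brisk. A long overlap forces the element $c_2 (g_i c_2 g_i^{-1})^{\mp 1}$ to fix a long arc in $\mathrm{Axis}(c_2)$; to reach a contradiction you then need to combine the very-small hypothesis on $T_1$ (arc stabilizers maximal cyclic) with malnormality of the maximal cyclic subgroup $\langle c_2 \rangle$ in $F_k$ to conclude that $c_2$ itself lies in that arc stabilizer, contradicting hyperbolicity. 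You should also dispose separately of the easy case where $c_2$ is elliptic in $T_1$. With those details supplied, your argument goes through.
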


\noindent These bounds are shown in \cite{un:CP} to hold not only for cyclic subgroups, but for any finitely generated malnormal subgroup of $F_k$; in particular any proper free factor of $F_k$.

We will also need the following notions from \cite{un:CP} for Section
\ref{sc:stable}.

Suppose that $T$ is a very small cyclic tree for an amalgamated free
product $F_k = A*_{\la c \ra} B$.  After possibly interchanging $A
\leftrightarrow B$, there is a basis $\CT = \CA \cup \CB$ for $F_k$
such that $c \in \CA$, and such that $\CA$ is a basis for $A$ and $\CB
\cup \{ c \}$ is a basis for $B$.  Such a basis is called a
\emph{basis relative to $T$}.  If $x \in F_k$ and $\ell_T(x) = 2m >
0$, then $x$ is conjugate to a cyclically reduced word of the form:
\begin{equation*}
  x_1c^{i_1}y_1c^{j_1} \cdots x_mc^{i_m}y_mc^{j_m}
\end{equation*}
where for $s =1,\ldots,m$, each $y_s$ is a word in $\CB$, each
$x_s$ a word in $\CA$, such that both $zx_s$ and $x_sz$ are reduced
for $z = c,c^{-1}$.

Now suppose that $T$ is a very small cyclic tree for an HNN-extension
$F_k = A*_{\la tc't^{-1} = c \ra}$.  After possibly interchanging $A
\leftrightarrow tAt^{-1}$, there is a basis $\CA \cup \{ t_0\}$ for
$F_k$ such that $t = t_0a$ for some $a \in A$, $c \in \CA$ and $\CA
\cup \{ t_0^{-1}ct_0 \}$ is a basis for $A$.  If $x \in F_k$ and
$\ell_T(x) = m > 0$, then $x$ is conjugate to a cyclically reduced
word of the form:
\begin{equation*}
  x_1(c^{i_1}t_0)^{\epsilon_1}x_2(c^{i_2}t_0)^{\epsilon_2} \cdots 
  x_m(c^{i_m}t_0)^{\epsilon_m}  
\end{equation*}
where for $s = 1, \ldots,m$, $x_s$ is a word in $\CA \cup
\{t_0^{-1}ct_0 \}$, $\epsilon_s \in \{ \pm 1\}$; and if $\epsilon_s =
1$, then $x_sz$ is a reduced word for $z = c,c^{-1}$; and if
$\epsilon_r = -1$ then $zx_{s+1}$ is a reduced word for $z =
c,c^{-1}$.

In either of two above cases, we say that the specific word is
\emph{$T$--reduced}.


\subsection{Currents}\label{ssc:currents}

Measured geodesic currents for hyperbolic groups were first defined by
Bonahon \cite{col:Bonahon91}.  Recently, (measured geodesic) currents
for free groups have seen much activity through the work of Kapovich
and Lustig
\cite{col:Kapovich06,KL_Complex,KL_Intersection,KL_Boundary}.  We
briefly introduce the parts of the theory needed for the sequel; see
\cite{col:Kapovich06} for further details.

The group $F_k$ is hyperbolic and hence has a boundary $\partial F_k$.
We denote:
\[ \partial^2 F_k = \{ (x_1,x_2) \in \partial F_k \times \partial F_k
\; | \; x_1 \neq x_2 \} \] This is naturally identified with the space
of oriented geodesics in a Cayley tree for $F_k$.  There is
fixed-point free involution ``flip'' map $\sigma\co \partial^2 F_k
\to \partial^2 F_k$ defined by $\sigma(x_1,x_2) = (x_2,x_1)$ which
corresponds to reversing the orientation on the geodesic.

A {\it (measured geodesic) current} on $F_k$ is an $F_k$--invariant
positive Radon measure on $\partial^2 F_k/\sigma$.  The set
$Curr(F_k)$ is the set of all currents on $F_k$, topologized with the
weak topology.  There is an action of $\BR_{> 0}$ on $Curr(F_k) -\{ 0
\}$, and the quotient $\BP Curr(F_k)$ is a compact space.  There is a
continuous left action of $\Out F_k$ on $Curr(F_k)$ and $\BP
Curr(F_k)$ defined by $\phi\nu(S) = \nu(\phi^{-1}(S))$, where $\phi
\in \Out F_k$, $\nu \in Curr(F_k)$, and where $S$ is a measurable set
of $\partial^2 F_k/\sigma$.  There is a slight abuse of notation here
as strictly speaking $\phi^{-1}(S)$ is not well-defined.  But for any
two $\Phi_0,\Phi_1 \in \Aut F_k$ representing $\phi \in \Out F_k$,
there is an $x \in F_k$ such that $x\Phi_0^{-1}(S) = \Phi_1^{-1}(S)$
and hence $\nu(x\Phi_0^{-1}(S)) = \nu(\Phi_1^{-1}(S))$ since $\nu$ is
$F_k$--invariant.

Given a basis $\CA$ of $F_k$, we have an identification between
$\partial^2 F_k/\sigma$ and unoriented geodesics in $T_\CA$, the
Cayley tree for $\CA$. For a nontrivial $g \in F_k$ (thought of as a
vertex in $T_\CA$) and $\nu \in Curr(F_k)$, we define the
\emph{two-sided cylinder}:
\begin{align*} 
  Cyl_\CA(g) & = \{\mbox{unoriented geodesics in $T_\CA$ containing} \\
  & \qquad \qquad \mbox{the vertices $1$ and $g$} \}
  \subset \partial^2 F_k/\sigma
  \mbox{ and denote} \\
  \la g, \nu \ra_\CA & = \nu(Cyl_\CA(g)).
\end{align*}
As the sets $\bigcup_{h,g \in F_k} hCyl_\CA(g)$ form a basis for the
topology of $\partial^2 F_k/\sigma$, and as $\nu(hCyl_\CA(g)) =
\nu(Cyl_\CA(g))$, a current $\nu \in Curr(F_k)$ is determined by its
values $\la g, \nu \ra_\CA$.
 
Using these notions there is a useful normalization of a current $\nu$
relative to the basis $\CA$.  Put:
\[ \omega_\CA(\nu) = \sum_{x \in \CA} \la x, \nu \ra_\CA. \] The
following lemma provides a useful way to show convergence in $\BP
Curr(F_k)$:
\begin{lemma}[\cite{col:Kapovich06}, Lemmas 2.11 and
  3.5]\label{lm:convergence}
  Let $\CA$ be a basis for $F_k$.  Then $\lim_{m \to \infty}[\nu_m] =
  [\nu]$ if and only if for every nontrivial $g \in F_k$
  \[ \lim_{m \to \infty} \frac{\la g, \nu_m\ra_\CA}{\omega_\CA(\nu_m)}
  = \frac{\la g,\nu\ra_\CA}{\omega_\CA(\nu)}. \]
\end{lemma}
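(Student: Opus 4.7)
The plan is to reduce convergence in $Curr(F_k)$ to pointwise convergence of the cylinder values $\la g, \cdot\ra_\CA$, and then handle projectivization by normalizing against $\omega_\CA$. Three facts about $\partial^2 F_k/\sigma$ are crucial: each $Cyl_\CA(g)$ is clopen, hence a continuity set for every Radon measure; the cylinders form a countable basis and a current is determined by its values on them; and $F_k$-invariance combined with the tree geometry gives the uniform estimate $\la g,\nu\ra_\CA \leq \omega_\CA(\nu)$ for every nontrivial $g$, since any geodesic through the vertices $1$ and $g$ passes through the first letter $x$ of $g$ (so $Cyl_\CA(g) \subseteq Cyl_\CA(x)$), and the case $x \in \CA^{-1}$ reduces to $x \in \CA$ by $F_k$-invariance applied to $Cyl_\CA(x^{-1}) = x^{-1}\cdot Cyl_\CA(x)$.

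For the forward direction, suppose $[\nu_m] \to [\nu]$; by definition there exist scalars $\lambda_m > 0$ with $\lambda_m \nu_m \to \nu$ in $Curr(F_k)$. Evaluating on each cylinder yields $\lambda_m \la g, \nu_m\ra_\CA \to \la g, \nu\ra_\CA$, and summing the finitely many equalities over $x \in \CA$ gives $\lambda_m \omega_\CA(\nu_m) \to \omega_\CA(\nu)$. Since $\nu \neq 0$ we have $\omega_\CA(\nu) > 0$, and dividing the two convergences produces the claimed ratio.

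For the backward direction, normalize $\nu_m' = \nu_m/\omega_\CA(\nu_m)$ and $\nu' = \nu/\omega_\CA(\nu)$, so the hypothesis reads $\la g, \nu_m'\ra_\CA \to \la g, \nu'\ra_\CA$ for every nontrivial $g$. The slice $K = \{\mu \in Curr(F_k) : \omega_\CA(\mu) = 1\}$ is compact: the bound $\la g,\mu\ra_\CA \leq 1$ exhibits cylinder evaluation as a topological embedding of $K$ into the compact product $[0,1]^{F_k \setminus \{1\}}$, with closed image. Therefore every subsequence of $(\nu_m')$ has a further subsequence converging to some $\nu^* \in K$; combining weak convergence $\la g, \nu_m'\ra_\CA \to \la g, \nu^*\ra_\CA$ with the hypothesis forces $\la g, \nu^*\ra_\CA = \la g, \nu'\ra_\CA$ for every $g$, so $\nu^* = \nu'$ by cylinder determination. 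Hence $\nu_m' \to \nu'$ and $[\nu_m] \to [\nu]$.

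The principal obstacle is establishing the background measure-theoretic input — that the cylinders control the weak topology on $Curr(F_k)$, and that the slice $\{\omega_\CA = 1\}$ is compact — where the totally disconnected structure of $\partial^2 F_k/\sigma$ together with $F_k$-invariance does the real work, reducing global control of a current to the finite data attached to the basis $\CA$. These are exactly the preparatory results packaged as Lemmas 2.11 and 3.5 in \cite{col:Kapovich06}.
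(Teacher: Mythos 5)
The paper does not prove this lemma at all: it is imported wholesale from Kapovich (\cite{col:Kapovich06}, Lemmas~2.11 and~3.5), so there is no in-paper argument to measure you against. Your outline is, in substance, the standard proof of the cited result, and it is correct as a sketch. The forward direction is fine, though ``by definition there exist scalars $\lambda_m$'' deserves a word: convergence in a quotient topology does not by itself produce converging lifts. What saves you is exactly the normalization you use later --- the map $\nu \mapsto \nu/\omega_\CA(\nu)$ is a continuous section of the projectivization (continuity of $\omega_\CA$ following from the cylinders being compact open continuity sets), so $[\nu_m]\to[\nu]$ does force $\nu_m/\omega_\CA(\nu_m)\to\nu/\omega_\CA(\nu)$, and $\lambda_m=\omega_\CA(\nu)/\omega_\CA(\nu_m)$ works. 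The backward direction is the standard compactness-plus-uniqueness-of-subsequential-limits argument. The two places where real work is hidden are (i) that the image of the slice $K=\{\omega_\CA=1\}$ in $[0,1]^{F_k\setminus\{1\}}$ is closed --- equivalently, that every system of cylinder weights satisfying the switch/consistency conditions actually arises from a current --- and (ii) that the weak topology on $K$ agrees with the topology of pointwise convergence on cylinders (which needs the uniform bound $\la g,\mu\ra_\CA\le 1$ together with the fact that translated cylinders form a basis of compact open sets). You correctly identify these as precisely the content of Kapovich's Lemmas~2.11 and~3.5 rather than pretending to derive them, so the proposal is an honest and accurate reconstruction of the external proof rather than a new route.
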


Particularly useful are the \emph{counting currents}, defined as
follows.  Given a nontrivial $h \in F_k$ that is not a proper power,
define the current $\eta_h$ by:
\[ \la g, \eta_h \ra_\CA = \la g^{\pm 1}, h \ra_\CA. \] Here $\la
g^{\pm 1}, h \ra_\CA$ is the number of \emph{occurrences} of $g$ or
$g^{-1}$ in the cyclic word determined by $h$; specifically, this is
the number of times either of the reduced words $g$ or $g^{-1}$ appear
as a subword of the cyclic word determined by $h$.  When $h = f^m$
where $m \geq 1$ and $f$ is not a proper power, define $\eta_h =
m\eta_f$.  The current $\eta_h$ only depends on the conjugacy class of
$h$, and for $\phi \in \Out F_k$ we have $\phi\eta_h =
\eta_{\phi(h)}$.  Notice that for any nontrivial $h \in F_k$ we have
$\omega_\CA(\eta_h) = \ell_\CA(h)$.  Although we will not explicitly
use it, we remark that the set $\{[\eta_h]\}_{h \in F_k - \{1\}}$ is
dense in $\BP Curr(F_k)$.

Similarly we define $o(g^{\pm 1},h)_\CA$ as the number occurrences of
$g$ or $g^{-1}$ in the word $h$; specifically, this is the number of
times the reduced words $g$ or $g^{-1}$ appear as a subword of the
word $h$.  A direct application of the Bounded Cancellation Lemma
\ref{lm:BCC} gives the following.

\begin{lemma}\label{lm:occurance-cancellation}
  Let $\CA$ and $\CB$ be bases for $F_k$ and fix $a \in \CA$.  Then
  there exists a constant $C \geq 0$ such that if $w$, $w'$, and $ww'$
  are all reduced words in $\CB$ and $ww'$ is cyclically reduced in
  $\CB$, then:
  \begin{equation*}
    o(a^{\pm 1}, w )_\CA + o(a^{\pm 1}, w' )_\CA \leq
    \la a^{\pm 1}, ww' \ra_\CA + C.
  \end{equation*}
\end{lemma}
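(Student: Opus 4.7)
My plan is to apply the bounded cancellation lemma twice: once for the junction cancellation when concatenating $w$ and $w'$ rewritten in $\CA$, and once for the extra trimming required to cyclically reduce in $\CA$ (the ``almost cyclically reduced'' consequence of Lemma~\ref{lm:BCC}). The point is that since $ww'$ is reduced in $\CB$ and cyclically reduced in $\CB$, rewriting everything in the basis $\CA$ loses only a bounded number of letters, so almost every occurrence of $a^{\pm 1}$ in the $\CA$-spellings of $w$ and $w'$ survives into the cyclic $\CA$-word of $ww'$.

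Concretely, let $C' = BCC(\CB,\CA)$. Let $W$ and $W'$ be the reduced $\CA$-spellings of $w$ and $w'$, so that $o(a^{\pm 1},w)_\CA = o(a^{\pm 1},W)$ and similarly for $w'$. Because $ww'$ is reduced in $\CB$, the swapped form of the bounded cancellation lemma tells me that in $\CA$ the concatenation $WW'$ cancels at most $C'$ letters on each side of the junction; write $W = W_1 S$ and $W' = S^{-1}W'_1$ with $|S|_\CA \le C'$, so that $W_1 W'_1$ is the reduced $\CA$-form of $ww'$. Next, because $ww'$ is cyclically reduced in $\CB$, the ``almost cyclically reduced'' observation following Lemma~\ref{lm:BCC} gives $W_1 W'_1 = u x u^{-1}$ with $x$ cyclically reduced in $\CA$ and $|u|_\CA \le C'$.

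I then read off that each letter of $W$ or of $W'$ either appears as a specific letter in the cyclically reduced word $x$, or is among the at most $2|S|_\CA + 2|u|_\CA \le 4C'$ letters discarded in passing from $W W'$ to $x$. In particular this holds for the letters equal to $a$ or $a^{-1}$, giving
\[
o(a^{\pm 1},W) + o(a^{\pm 1},W') \;\le\; o(a,x) + o(a^{-1},x) + 4C'.
\]
Since $x$ is cyclically reduced in $\CA$ and $a$ is a single letter, the count $o(a,x)+o(a^{-1},x)$ equals the number of times $a^{\pm 1}$ is traversed in the cyclic $\CA$-word of $ww'$, which is exactly $\la a^{\pm 1}, ww' \ra_\CA$. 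Taking $C = 4C' = 4\,BCC(\CB,\CA)$ gives the stated inequality.

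I do not anticipate any real obstacle here; the only point to be careful about is the bookkeeping distinguishing (i) cancellation at the $W/W'$ junction from (ii) the additional cyclic trimming at the outer ends, and to verify that applying bounded cancellation with the roles of $\CA$ and $\CB$ swapped is legitimate (it is, by applying Lemma~\ref{lm:BCC} to the pair $(\CB,\CA)$). The constant $C$ depends only on $\CA$ and $\CB$, as required.
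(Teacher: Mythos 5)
Your proof is correct and follows essentially the same route as the paper's: two applications of bounded cancellation with the constant $B = BCC(\CB,\CA)$, one controlling the loss of at most $2B$ occurrences at the $w/w'$ junction and one controlling the loss of at most $2B$ more occurrences under cyclic reduction in $\CA$, yielding the same constant $C = 4\,BCC(\CB,\CA)$. The only cosmetic difference is that you track the surviving letters directly into the cyclically reduced core $x$, whereas the paper passes through the intermediate quantity $o(a^{\pm 1}, ww')_\CA$.
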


\begin{proof}
  Let $B = BCC(\CB,\CA)$ so that:
  \[ o( a^{\pm 1}, w )_\CA + o( a^{\pm 1}, w' )_\CA - 2B \leq o(a^{\pm
    1},ww')_\CA. \] Since $ww'$ is cyclically reduced with respect to
  $\CB$, as a word in $\CA$ we have $ww' = zxz^{-1}$ where $|z|_\CA
  \leq B$ and $x$ is cyclically reduced in $\CA$.  Thus:
  \[ o(a^{\pm 1},ww')_\CA \leq \la a^{\pm 1}, ww' \ra_\CA + 2B. \]
  Therefore, for $C = 4B$, the lemma holds.
\end{proof}

As in the Outer space setting, a hyperbolic fully irreducible element
acts with North-South dynamics on $\BP Curr(F_k)$.  Here is a weak
version of this statement that is sufficient for our needs.

\begin{theorem}[\cite{Martin}, cf.~\cite{un:BF},~Proposition
  4.11]\label{th:NS-curr}
  Every hyperbolic fully irreducible element $\phi \in \Out F_k$ acts
  on $\BP Curr(F_k)$ with exactly two fixed points, $[\mu_+]$ and
  $[\mu_-]$.  Further, for any nontrivial $h \in F_k$:
  \[ \lim_{m \to \infty} [\phi^m\eta_h] = [\mu_+]. \]
\end{theorem}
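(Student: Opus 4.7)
The plan is to apply Lemma \ref{lm:convergence} with respect to a basis $\CA$ of $F_k$ chosen relative to $T_1$, so that $c_1 \in \CA$. Concretely, I must show that for every nontrivial $g \in F_k$,
\[
\lim_{n \to \infty} \frac{\la g, \mu_+^n \ra_\CA}{\omega_\CA(\mu_+^n)}
\;=\; \frac{\la g^{\pm 1}, c_1 \ra_\CA}{\ell_\CA(c_1)}.
\]
By Theorem \ref{th:NS-curr}, I write $[\mu_+^n] = \lim_{m \to \infty}[\eta_{\phi_n^m(c_1)}]$; the hypothesis $[\eta_{c_1}] \neq [\mu_-^n]$ holds because $\ell_{T_1}(\phi_n(c_1)) = \ell_{T_1}(\delta_2^{-n}(c_1)) > 0$ for $n \geq N$ (by Theorem \ref{th:linear} applied with $\ell_{T_1}(c_2) > 0$, a consequence of the filling hypothesis), so $\eta_{c_1}$ is not $\phi_n$-invariant in $\BP Curr(F_k)$.

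The key computation rests on the decomposition $\phi_n^m(c_1) = \delta_1^n(W_m)$ where $W_m = \delta_2^{-n}(\phi_n^{m-1}(c_1))$. Placing $W_m$ in $T_1$-reduced form $\prod_l x_l c_1^{i_l} y_l c_1^{j_l}$ with $\ell_{T_1}(W_m) = 2s$, the Dehn twist $\delta_1^n$ replaces each exponent $i_l$ by $i_l+n$ and each $j_l$ by $j_l-n$, inserting a $c_1^{\pm n}$ power into each of the $2s$ blocks. A direct count, using Lemmas \ref{lm:BCC} and \ref{lm:occurance-cancellation} to control block-boundary corrections and the passage between reduction in the graph-of-groups and in the basis $\CA$, yields
\begin{align*}
\ell_\CA(\delta_1^n(W_m))
 &\;=\; n\,\ell_{T_1}(W_m)\,\ell_\CA(c_1) + O(\ell_\CA(W_m)),\\
\la g^{\pm 1}, \delta_1^n(W_m) \ra_\CA
 &\;=\; n\,\ell_{T_1}(W_m)\,\la g^{\pm 1}, c_1\ra_\CA + O(\ell_\CA(W_m)),
\end{align*}
with implicit constants depending only on $g$, $c_1$, and $\CA$. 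Taking the ratio produces $\la g^{\pm 1}, c_1\ra_\CA/\ell_\CA(c_1)$ plus an error of order $\ell_\CA(W_m)/(n\,\ell_{T_1}(W_m))$.

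To bound the error, I apply Theorem \ref{th:linear} to $W_m = \delta_2^{-n}(\phi_n^{m-1}(c_1))$: bound \eqref{al:twist-lb} gives $\ell_{T_1}(W_m) \gtrsim n\,\ell_{T_1}(c_2)\,\ell_{T_2}(\phi_n^{m-1}(c_1))$, while a parallel block analysis for $\delta_2^{-n}$ in the basis $\CA$ gives $\ell_\CA(W_m) \lesssim \ell_\CA(\phi_n^{m-1}(c_1)) + n\,\ell_\CA(c_2)\,\ell_{T_2}(\phi_n^{m-1}(c_1))$. Substituting, the error becomes asymptotically $\ell_\CA(c_2)/(n\,\ell_{T_1}(c_2))$, which vanishes as $n \to \infty$, plus a term of order $\ell_\CA(\phi_n^{m-1}(c_1))/(n^2\,\ell_{T_2}(\phi_n^{m-1}(c_1)))$. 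Sending $m \to \infty$ first sends the ratio $\ell_\CA/\ell_{T_2}$ to a finite value determined by $\mu_+^n$, and the $1/n^2$ factor suppresses this term provided that limit ratio does not grow faster than $n^2$ in $n$.

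The main obstacle lies precisely in this uniform control — equivalently, that the $T_2$-length of $\mu_+^n$ does not collapse relative to $\omega_\CA(\mu_+^n)$ as $n \to \infty$. I would handle this by a compactness argument in $\BP Curr(F_k)$: any subsequential limit $[\mu_\infty]$ of $[\mu_+^n]$ with non-collapsing $T_2$-length must, by the estimate above, equal $[\eta_{c_1}]$, while collapsing subsequences can be ruled out via the symmetric identity $[\delta_2^{-n}\mu_+^n] = [\delta_1^{-n}\mu_+^n]$ (which follows from $\phi_n\mu_+^n = \lambda_n \mu_+^n$) combined with the filling hypothesis forcing $[\eta_{c_1}] \neq [\eta_{c_2}]$. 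The companion convergence $\lim_n [\mu_-^n] = [\eta_{c_2}]$ then follows by running the same argument on $\phi_n^{-1} = \delta_2^n \delta_1^{-n}$ using a basis relative to $T_2$.
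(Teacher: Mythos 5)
Your proposal does not address the statement in question. Theorem \ref{th:NS-curr} asserts, for an \emph{arbitrary} hyperbolic fully irreducible $\phi \in \Out F_k$, that the action on $\BP Curr(F_k)$ has exactly two fixed points $[\mu_+],[\mu_-]$ and that $[\phi^m\eta_h] \to [\mu_+]$ as $m \to \infty$ for every nontrivial $h$. What you prove instead is the convergence $\lim_{n\to\infty}[\mu_+^n]=[\eta_{c_1}]$ (and $\lim_{n\to\infty}[\mu_-^n]=[\eta_{c_2}]$) for the specific family $\phi_n=\delta_1^n\delta_2^{-n}$ of twist products --- that is Theorem \ref{th:B} of the paper, a different result, and indeed your outline closely parallels the paper's proof of that theorem (the $T_1$-reduced block count for $\delta_1^n$, Lemma \ref{lm:occurance-cancellation}, Theorem \ref{th:linear} for the $\delta_2^{-n}$ factor, Lemma \ref{lm:convergence} to conclude). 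Worse, as an argument for Theorem \ref{th:NS-curr} your proposal is circular: the very first step ("By Theorem \ref{th:NS-curr}, I write $[\mu_+^n]=\lim_{m\to\infty}[\eta_{\phi_n^m(c_1)}]$") invokes exactly the North-South dynamics statement you would be required to establish, and nothing in the proposal touches the existence or uniqueness of the two fixed points.

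Note also that the paper offers no proof of Theorem \ref{th:NS-curr}: it is quoted from Martin's thesis (cf.\ Bestvina--Feighn, Proposition 4.11). A genuine proof would require machinery entirely absent from your outline --- train track representatives and their expansion factors, the attracting/repelling laminations of $\phi$, the construction of the associated currents $\mu_\pm$ supported on those laminations, and a contraction argument giving uniform convergence of $[\phi^m\eta_h]$ to $[\mu_+]$ for all nontrivial $h$. If your goal was Theorem \ref{th:B}, your sketch is broadly on the right track (modulo the uniformity issue you flag at the end, which the paper handles by comparing $\ell_{\CT_1}$ with $\ell_{T_1}+\ell_{T_2}$ via Kapovich--Lustig rather than by a compactness argument on subsequential limits); but as a proof of the stated theorem it is both off-target and circular.
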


The currents $[\mu_+]$ and $[\mu_-]$ are called the \emph{stable} and
\emph{unstable} currents of $\phi$, respectively.  The stable and
unstable currents of $\phi^{-1}$ are $[\mu_-]$ and $[\mu_+]$,
respectively.

The existence of a continuous $\Out F_k$--invariant {\it intersection
  form} is established by the following.

\begin{theorem}[\cite{KL_Complex}, Theorem A]\label{th:intersection}
  There is a unique continuous map:
  \[ \la \; \, , \; \ra\co \overline{cv}_k \times Curr(F_k) \to \BR_{\geq
    0} \]
  such that:
  \begin{enumerate}
  \item for any $h \in F_n$ we have $\la T, \eta_h \ra = \ell_T(h)$; and which is 
  \item $\Out F_k$--invariant: $\la T\psi,\mu \ra = \la T,\psi
    \mu \ra$;
  \item homogeneous with respect the the first coordinate: $\la
    \lambda T, \mu \ra = \lambda \la T ,\mu \ra$ for $\lambda > 0$;
    and
  \item linear with respect to the second coordinate: $\la
    T,\lambda_1\mu_1 + \lambda_2\mu_2 \ra = \lambda_1\la T,\mu_1 \ra +
    \lambda_2\la T,\mu_2 \ra$ for $\lambda_1,\lambda_2 \geq 0$.
  \end{enumerate}
\end{theorem}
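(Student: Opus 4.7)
The plan is to establish uniqueness first and then construct the intersection form by extending from counting currents via linearity and continuity. For uniqueness, axioms (1) and (4) determine $\la T, \mu \ra$ whenever $T \in cv_k$ and $\mu$ is a finite non-negative linear combination of counting currents $\eta_h$. Since $\{[\eta_h]\}_{h \in F_k - \{1\}}$ is dense in $\BP Curr(F_k)$, and since axiom (4) applied to a single term records compatibility with positive rescaling, continuity on the second factor pins the form down on all of $cv_k \times Curr(F_k)$; axiom (3) then reduces questions on the boundary to normalized representatives, and continuity in the first factor fills in $\overline{cv}_k \times Curr(F_k)$.

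For existence, I would begin by declaring $\la T, \eta_h \ra := \ell_T(h)$ for every $T \in \overline{cv}_k$ (this is meaningful because translation length functions extend continuously to $\overline{cv}_k$) and extending linearly to the cone on counting currents. To promote this to a map on all of $\overline{cv}_k \times Curr(F_k)$, I would fix a basis $\CA$ and use the identity $\omega_\CA(\eta_h) = \ell_\CA(h)$ together with the cylinder decomposition $\la g, \eta_h \ra_\CA = \la g^{\pm 1}, h \ra_\CA$ as a template: for a general current $\mu$ and an equivariant marking $T_\CA \to T$, the natural definition weights each cylinder $Cyl_\CA(g)$ by the reduced translation contribution of the corresponding segment in $T$. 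On simplicial trees in $cv_k$ this recovers the counting-current formula, and Lemma \ref{lm:BCC} ensures that the value obtained is independent of the basis up to controlled error that vanishes after normalization by $\omega_\CA$.

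The main obstacle will be continuity at the boundary $\overline{cv}_k - cv_k$, where trees may be non-simplicial or have collapsing subtrees, and simultaneously at currents that are not supported on finitely many cylinders. The plan is a two-stage argument: first, for fixed $\mu$ and $T_n \to T$ in $\overline{cv}_k$, continuity of $T \mapsto \ell_T(h)$ combined with dominated convergence handles the case $\mu = \sum \lambda_i \eta_{h_i}$. Second, for fixed $T$ and $\mu_n \to \mu$, Lemma \ref{lm:convergence} allows approximation of $\mu$ by counting currents after normalization by $\omega_\CA$. Interchanging these two limits is the crux step: one needs a uniform bound of the form $|\la T_n, \mu \ra - \la T, \mu \ra| \leq \varepsilon_n \omega_\CA(\mu)$ that is linear in the current, and this is where the bounded cancellation lemma does the heavy lifting. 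Without such uniformity one obtains only lower semicontinuity, and upgrading to joint continuity requires a careful comparison between measure-theoretic cylinder counts and translation lengths in $T$ that is robust under degeneration of the tree.
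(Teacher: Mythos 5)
This statement is not proved in the paper at all: it is Theorem A of Kapovich--Lustig \cite{KL_Complex}, quoted as a black box, so there is no internal proof to compare against. Judged on its own terms, your sketch gets the easy half right and leaves the hard half as an assertion. The uniqueness argument is fine and is the standard one: properties (1), (3), (4) fix the form on $cv_k \times \{\lambda\eta_h\}$, scalar multiples of counting currents are dense in $Curr(F_k)$, $cv_k$ is dense in $\overline{cv}_k$, and continuity does the rest.

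The existence half has a genuine gap. You never actually define $\la T,\mu\ra$ for a general pair: ``weights each cylinder $Cyl_\CA(g)$ by the reduced translation contribution of the corresponding segment in $T$'' is not a formula, and the whole difficulty of the theorem lives in making that precise for trees in $\overline{cv}_k - cv_k$, which may have dense orbits, nontrivial arc stabilizers, and no simplicial structure. Two specific failures: (i) a general current is not a countable nonnegative combination of counting currents, so there is no series over which to run dominated convergence; extension from the dense set requires exactly the uniform estimate $|\la T_n,\mu\ra - \la T,\mu\ra| \leq \varepsilon_n\,\omega_\CA(\mu)$ you name, and you give no argument for it. (ii) Lemma \ref{lm:BCC} as stated compares two \emph{bases}, i.e.\ two free simplicial actions; it says nothing about equivariant maps from a Cayley tree onto a very small tree with point stabilizers or indiscrete length spectrum. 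What is needed is the bounded back-tracking property for such morphisms, uniformly over a neighborhood of $T$ in $\overline{cv}_k$, together with the structure theory of very small trees --- this is the actual content of Kapovich--Lustig's proof and cannot be replaced by the bounded cancellation lemma for bases. As written, your plan establishes the form on $cv_k \times Curr(F_k)$ and correctly identifies where it breaks, but does not close the argument.
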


The actions of $\Out F_k$ on $\overline{cv}_k$ and $Curr(F_k)$ satisfy
a type of ``unique-ergodicity'' with respect to this intersection form.

\begin{theorem}[\cite{KL_Intersection}, Theorem 1.3]\label{th:ergodic}
  Let $\phi \in \Out F_k$ be a hyperbolic fully irreducible element
  with stable and unstable trees $[T_+], [T_-] \in \overline{CV}_k$
  and stable and unstable currents $[\mu_+], [\mu_-] \in \BP
  Curr(F_k)$.  The following statements hold.
  \begin{enumerate}

  \item If $\mu \in Curr(F_k) - \{ 0 \}$, then $\la T_\pm,\mu \ra = 0$
    if and only if $[\mu] = [\mu_\mp]$.

  \item If $T \in \overline{cv}_k$, then $\la T, \mu_\pm \ra = 0$ if
    and only if $[T] = [T_\mp]$.

  \end{enumerate}
\end{theorem}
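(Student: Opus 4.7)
The plan is to combine North--South dynamics on $\overline{CV}_k$ and $\BP Curr(F_k)$ (Theorems \ref{th:NS-cv} and \ref{th:NS-curr}) with the invariance, homogeneity, and continuity of the intersection form (Theorem \ref{th:intersection}). Because $\phi$ fixes $[T_+] \in \overline{CV}_k$ and $[\mu_-] \in \BP Curr(F_k)$, there are scalars $\lambda, M > 0$ with $T_+\phi = \lambda T_+$ and $\phi\mu_- = M^{-1}\mu_-$. Bestvina--Handel train track theory \cite{Bestvina-Handel} gives $\lambda, M > 1$: $\lambda$ is the Perron--Frobenius expansion factor of an irreducible train track representative of $\phi$, and $M$ the analogous factor for $\phi^{-1}$. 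For the easy direction of (1), invariance and homogeneity yield
\[ \lambda\,\langle T_+, \mu_- \rangle \;=\; \langle T_+\phi, \mu_- \rangle \;=\; \langle T_+, \phi\mu_- \rangle \;=\; M^{-1}\langle T_+, \mu_- \rangle, \]
and $\lambda > 1 > M^{-1}$ forces $\langle T_+, \mu_- \rangle = 0$; applying the same reasoning to $\phi^{-1}$ (whose stable/unstable data swap $T_\pm \leftrightarrow T_\mp$ and $\mu_\pm \leftrightarrow \mu_\mp$) gives all other cross-vanishings needed in (1) and (2).

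For the converse in (1), suppose $\mu \in Curr(F_k) \setminus \{0\}$ satisfies $\langle T_+, \mu \rangle = 0$ yet $[\mu] \neq [\mu_-]$. By Theorem \ref{th:NS-curr}, $[\phi^n\mu] \to [\mu_+]$; Lemma \ref{lm:convergence} upgrades this to convergence of $\phi^n\mu/\omega_\CA(\phi^n\mu)$ to $\mu_+/\omega_\CA(\mu_+)$ in $Curr(F_k)$. Continuity of the intersection form combined with invariance and homogeneity then gives
\[ \frac{\lambda^n \langle T_+, \mu \rangle}{\omega_\CA(\phi^n\mu)} \;=\; \biggl\langle T_+,\, \frac{\phi^n\mu}{\omega_\CA(\phi^n\mu)}\biggr\rangle \;\longrightarrow\; \frac{\langle T_+, \mu_+ \rangle}{\omega_\CA(\mu_+)}. \]
The left side is identically zero by hypothesis, so $\langle T_+, \mu_+ \rangle = 0$. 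The contradiction comes from the self-intersection positivity $\langle T_+, \mu_+ \rangle > 0$.

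Establishing this positivity is the main obstacle. I would run the same limit with $\mu = \eta_h$ for any nontrivial $h \in F_k$: the numerator becomes $\lambda^n \ell_{T_+}(h)$, which is positive because $T_+$ is a free $F_k$--action (no nontrivial elements are elliptic, since $\phi$ is hyperbolic and fully irreducible), while the denominator $\omega_\CA(\phi^n\eta_h) = \ell_\CA(\phi^n(h))$ grows like $C(h)\lambda^n$ with $C(h) > 0$ by the Perron--Frobenius analysis of the train track in \cite{Bestvina-Handel}. The limit is thus $\ell_{T_+}(h)/C(h) > 0$, which must equal $\langle T_+, \mu_+ \rangle / \omega_\CA(\mu_+)$, so $\langle T_+, \mu_+ \rangle > 0$ as required; by the symmetry $\phi \leftrightarrow \phi^{-1}$ the same proof gives $\langle T_-, \mu_- \rangle > 0$. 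Part (2) is handled in parallel: the easy direction is among the cross-vanishings already established, while the converse uses Theorem \ref{th:NS-cv}, continuity of the intersection form in its first coordinate (with a normalization on $\overline{cv}_k$ such as $\sigma_\CA(T) = \sum_{x \in \CA} \ell_T(x)$, which is continuous and homogeneous), and the same positivity $\langle T_\pm, \mu_\pm \rangle > 0$ to derive the contradiction.
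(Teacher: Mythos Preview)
The paper does not prove this theorem: it is quoted as Theorem~1.3 of Kapovich--Lustig \cite{KL_Intersection}, with no argument supplied. So there is no in-paper proof to compare your attempt against.

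As for your sketch itself: the overall strategy is sound and close in spirit to how one can recover the result, but two points deserve care. First, for the converse in (1) you invoke Theorem~\ref{th:NS-curr} to conclude $[\phi^n\mu] \to [\mu_+]$ for an \emph{arbitrary} current $\mu$ with $[\mu] \neq [\mu_-]$; as stated in this paper, Theorem~\ref{th:NS-curr} only asserts this for counting currents $\eta_h$ (the authors explicitly call it a ``weak version''). The full North--South dynamics on $\BP Curr(F_k)$ is indeed a theorem of Martin \cite{Martin}, but you need that stronger statement, not the weak form recorded here. Second, your positivity argument for $\langle T_+, \mu_+ \rangle > 0$ imports facts from \cite{Bestvina-Handel} that are not proved in this paper: that $T_+$ is a free $F_k$--tree when $\phi$ is hyperbolic, and that $\ell_\CA(\phi^n(h))/\lambda^n$ converges to a positive constant. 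Both are correct, but you are effectively pulling in the same outside machinery that the authors bypass by citing \cite{KL_Intersection} directly.
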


The difference in signs $\pm$ and $\mp$ between the above and
its version in \cite{KL_Intersection} is due to our use of the right
action of $\Out F_k$ on $\overline{cv}_k$.


\subsection{\texorpdfstring{Bestvina--Feighn hyperbolic
    $\Out(F_k)$--complex}{Bestvina--Feighn hyperbolic
    Out(Fk)--complex}}\label{ssc:qm}

The final space we consider is given by the following theorem.

\begin{theorem}[\cite{un:BF}, Main Theorem]\label{th:complex}
  For any finite collection $\phi_1,\ldots,\phi_n$ of fully
  irreducible elements of $\Out F_k$ there is a connected
  $\delta$--hyperbolic graph $\CX$ equipped with an (isometric) action
  of $\Out F_k$ such that:
  \begin{enumerate}
  \item the stabilizer in $\Out F_k$ of a simplicial tree in
    $\overline{CV}_k$ has bounded orbits;
  \item the stabilizer in $\Out F_k$ of a proper free factor $F
    \subset F_k$ has bounded orbits; and
  \item $\phi_1,\ldots,\phi_n$ have nonzero translation lengths.
  \end{enumerate}
\end{theorem}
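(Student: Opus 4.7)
The plan is to produce $\CX$ as a variant of the \emph{free factor graph} $\mathcal{FF}_k$, whose vertices are conjugacy classes of proper free factors of $F_k$ and whose edges join nested factors. A separate result (due to Bestvina--Feighn, and independently Kapovich--Rafi) shows $\mathcal{FF}_k$ is $\delta$--hyperbolic, with the argument modeled on the Masur--Minsky proof of hyperbolicity of the curve complex: one defines combinatorial ``lines'' via folding paths in $CV_k$, shows these are uniform unparametrized quasigeodesics, and applies a local thinness criterion. Of the three desired properties, (2) is immediate, since the stabilizer of a free factor fixes a vertex; and (3) follows because a fully irreducible $\phi$ has no periodic proper free factor, so it acts on $\mathcal{FF}_k$ with unbounded orbit, which a standard $\delta$--hyperbolicity argument upgrades to positive translation length.

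The crux is (1): the stabilizer of a simplicial $T \in \overline{CV}_k$ contains Dehn twists, which typically act with unbounded orbits on $\mathcal{FF}_k$. To remedy this, I would invoke the \emph{projection complex} machinery of Bestvina--Bromberg--Fujiwara. For each $\Out F_k$--orbit of simplicial trees $T$, designate an axis-like subset $Y_T \subset \mathcal{FF}_k$ --- for instance, the free factors carrying the splitting encoded by $T$ --- and then either cone off these $Y_T$ or pass to the associated projection complex. The resulting graph $\CX$ should have the feature that every simplicial tree stabilizer, acting cocompactly on its $Y_T$, now has bounded orbit.

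The main obstacle is twofold. First, one must verify that the chosen projections $\pi_T \co \mathcal{FF}_k \to Y_T$ satisfy the Bestvina--Bromberg--Fujiwara axioms --- equivariance, bounded backward projection, and a Behrstock-type inequality --- in order for $\CX$ to remain $\delta$--hyperbolic. This is where folding paths and train-track technology enter: one uses them to control how a geodesic in $\mathcal{FF}_k$ can intertwine with the various $Y_T$. Second, one must show that each $\phi_i$ retains positive translation length in $\CX$. Here one uses that a fully irreducible element fixes no simplicial tree class, so its quasi-axis in $\mathcal{FF}_k$ is not contained in any $Y_T$ and only makes bounded excursions near them; combined with loxodromicity on $\mathcal{FF}_k$, this survives the coning-off procedure and yields (3) for the finite collection $\phi_1, \ldots, \phi_n$ simultaneously.
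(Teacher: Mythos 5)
First, note that the paper does not actually prove this statement: Theorem \ref{th:complex} is imported wholesale as the Main Theorem of \cite{un:BF}, and the authors say explicitly that ``the actual definition of $\CX$ is not necessary'' for their purposes. So your proposal can only be compared with the cited source, and there the construction is genuinely different: Bestvina and Feighn build $\CX$ directly from the prescribed collection $\phi_1,\ldots,\phi_n$ --- which is exactly why the complex in the statement depends on that finite collection --- whereas your route through the free factor graph $\mathcal{FF}_k$ and the Bestvina--Bromberg--Fujiwara projection complexes relies on machinery that postdates \cite{un:BF} and would, if completed, yield a single complex working for all fully irreducibles simultaneously, a strictly stronger statement.

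More importantly, the proposal has genuine gaps. (i) For (3), an unbounded orbit in a $\delta$--hyperbolic graph does \emph{not} upgrade to positive translation length: non-proper hyperbolic spaces admit parabolic-type isometries with unbounded orbits and zero stable translation length. Loxodromicity of fully irreducibles on $\mathcal{FF}_k$ is a substantive theorem proved via the Lipschitz metric and train tracks, not a formal consequence of unboundedness. (ii) Your diagnosis of why (1) fails for $\mathcal{FF}_k$ is mistaken: a Dehn twist about $F_k = A*_{\langle c\rangle}B$ fixes the conjugacy classes of $A$ and $B$, and in fact \emph{every} element of the stabilizer of a simplicial tree is reducible (a fully irreducible element fixes no simplicial class in $\overline{CV}_k$, by North--South dynamics), hence already has bounded orbit in $\mathcal{FF}_k$. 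The genuine issue is whether the whole stabilizer \emph{subgroup} has bounded orbit --- a group all of whose elements are elliptic need not be --- and this is delicate precisely for splittings whose vertex and edge groups fill; for those your sets $Y_T$ (``free factors carrying the splitting'') are empty, so the coning does nothing exactly where it is needed. (iii) The coning/projection step is where all the work lies and none is supplied: coning off preserves hyperbolicity only under uniform quasiconvexity and bounded-penetration (or BBF projection-axiom) hypotheses over the infinitely many $Y_T$, and the claim that each $\phi_i$ makes only uniformly bounded excursions into every $Y_T$ is precisely the WPD-type control that must be proved, not asserted. As it stands the argument is a plan rather than a proof.
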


The $\delta$-hyperbolicity of such a complex $\CX$ makes it comparable to the 
curve complex for the mapping class group, although its use is significantly restricted by its dependence on a finite set of fully irreducible elements.  For our purposes the actual definition of $\CX$ is not necessary; we need only that non-fully irreducible elements of $\Out F_k$ act on $\CX$ with bounded orbits,
and that the action of the elements $\phi_1,\ldots,\phi_n$ on $\CX$ have
nonzero translation length and satisfy a property known as WPD (weak
proper discontinuity).  We refer the reader to \cite{un:BF,ar:BF02} for further details.


\section{Producing hyperbolic automorphisms}\label{sc:hyperbolic}

In this section we show how to produce a hyperbolic outer automorphism
with a specified action on $H_1(F_k,\BZ)$.  
This involves examining the dynamics of elements on $Curr(F_k)$. Using
the ``unique-ergodicity'' and continuity of the intersection form $\la
\; ,\; \ra$ we can mimic an argument due to Fathi
\cite[Theorem~2.3]{ar:Fathi87} giving a construction of pseudo-Anosov
homeomorphisms.

\begin{proposition}\label{prop:hyperbolic}
  Let $\phi \in \Out F_k$ be a hyperbolic fully irreducible outer
  automorphism with stable and unstable currents $[\mu_+]$ and
  $[\mu_-]$ in $\BP Curr(F_k)$.  Suppose $\psi \in \Out F_k$ is such
  that $[\psi\mu_+] \neq [\mu_-]$.  Then there is an $M \geq 0$ such
  that for $m \geq M$ the element $\phi^m\psi$ is hyperbolic.
\end{proposition}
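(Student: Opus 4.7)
My plan is to argue by contradiction using the intersection form. Suppose there is a sequence $m_j \to \infty$ such that $\phi^{m_j}\psi$ is not hyperbolic; for each $j$ choose a nontrivial $h_j \in F_k$ and $n_j \geq 1$ such that $(\phi^{m_j}\psi)^{n_j}$ fixes the conjugacy class of $h_j$. Writing $\gamma_j = \phi^{m_j}\psi$, this gives the exact equation $\gamma_j^{n_j}\eta_{h_j} = \eta_{h_j}$. Normalize $\nu_j := \eta_{h_j}/\omega_\CA(\eta_{h_j})$; the equation still holds and $\omega_\CA(\nu_j) = 1$, so by compactness of $\BP Curr(F_k)$ one can extract a subsequence so that $[\nu_j] \to [\nu_\infty]$ with $\nu_\infty \neq 0$.

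The central identity, by $\Out F_k$-invariance of the intersection form (Theorem \ref{th:intersection}), is
\[ \langle T, \nu_j \rangle \;=\; \langle T\gamma_j^{n_j}, \nu_j\rangle \quad \text{for every } T \in \overline{cv}_k. \]
I would apply this first with $T = T_+$. Since $[T_+]$ is fixed by $\phi$, there is a constant $\lambda > 1$ with $T_+\phi = \lambda T_+$, so $T_+\gamma_j = \lambda^{m_j}(T_+\psi)$. The hypothesis $[\psi\mu_+] \neq [\mu_-]$ is equivalent, via Theorem \ref{th:ergodic}, to $[T_+\psi] \neq [T_-]$; combined with the North--South dynamics of $\phi$ on $\overline{CV}_k$ (Theorem \ref{th:NS-cv}), iterating $\gamma_j$ shows that $T_+\gamma_j^{n_j}$ has projective class converging to $[T_+\psi]$ while its norm grows like a factor $K_j$ of order $\lambda^{n_j m_j}$. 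The left-hand side $\langle T_+, \nu_j\rangle$ converges to $\langle T_+, \nu_\infty\rangle$ and is in particular bounded; the right-hand side is asymptotically $K_j \langle T_+\psi, \nu_j\rangle$, forcing $\langle T_+\psi, \nu_j\rangle \to 0$. By continuity of the intersection form, $\langle T_+\psi, \nu_\infty\rangle = 0$, which by Theorem \ref{th:ergodic} means $[\nu_\infty] = [\psi^{-1}\mu_-]$.

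Next I would repeat the identity with $T = T_-$. Now $T_-\phi = \lambda_- T_-$ with $\lambda_- < 1$, so $T_-\gamma_j = \lambda_-^{m_j}(T_-\psi)$; in the generic case $[T_-\psi] \neq [T_-]$, a parallel iteration gives $T_-\gamma_j^{n_j}$ scaling like $\lambda_-^{m_j}\lambda^{(n_j-1)m_j}$ times a bounded constant, with projective limit $[T_+\psi]$. Plugging in the rate $\langle T_+\psi, \nu_j\rangle = O(\lambda^{-n_j m_j})$ from the previous step makes $\langle T_-\gamma_j^{n_j}, \nu_j\rangle = O((\lambda_-/\lambda)^{m_j}) \to 0$, so $\langle T_-, \nu_\infty\rangle = 0$ and hence $[\nu_\infty] = [\mu_+]$. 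Combining the two conclusions yields $[\mu_+] = [\psi^{-1}\mu_-]$, i.e., $[\psi\mu_+] = [\mu_-]$, contradicting the hypothesis.

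The main obstacle is making the iterated scaling estimates for $T_\pm\gamma_j^{n_j}$ precise and uniform in both $m_j$ and $n_j$; this amounts to upgrading the projective North--South convergence on $\overline{CV}_k$ to a controlled statement about unprojectivized norms. There is also a subsidiary degenerate case $[T_-\psi] = [T_-]$: there $T_-\psi = c T_-$ gives $T_-\gamma_j^{n_j} = (c\lambda_-^{m_j})^{n_j}T_-$, and the invariance identity compels either $\langle T_-, \nu_j\rangle = 0$ (impossible, since then $[\eta_{h_j}] = [\mu_+]$, contradicting that a hyperbolic fully irreducible automorphism has no periodic conjugacy classes) or $c\lambda_-^{m_j} = 1$ for arbitrarily large $m_j$ with fixed $c$, also impossible.
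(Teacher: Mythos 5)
There is a genuine gap, and you have in fact named it yourself: the ``main obstacle'' of upgrading the projective North--South convergence on $\overline{CV}_k$ to uniform unprojectivized scaling estimates for $T_\pm\gamma_j^{n_j}$ is not a technicality to be deferred but the entire difficulty of the problem. Theorem \ref{th:NS-cv} gives only projective convergence with no rate and no control of norms, so the assertions that $T_+\gamma_j^{n_j}$ ``grows like a factor $K_j$ of order $\lambda^{n_jm_j}$'' and that $\la T_+\psi,\nu_j\ra = O(\lambda^{-n_jm_j})$ are unsupported. The identity $T_+\gamma_j = \lambda^{m_j}(T_+\psi)$ is exact only for the first application; from the second iterate on you must apply $\phi^{m_j}$ to trees such as $T_+\psi$ that are not eigenvectors of $\phi$, and their lengths do not scale by $\lambda^{m_j}$. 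Worse, the needed estimates must be uniform in \emph{both} $m_j$ and $n_j$ simultaneously (and in the second step you need the error in the projective approximation $T_-\gamma_j^{n_j}\approx[T_+\psi]$, multiplied by a norm of order $\lambda^{n_jm_j}$, to stay small --- an error term your $O((\lambda_-/\lambda)^{m_j})$ computation silently discards). Establishing such uniform two-parameter control is essentially a theorem in its own right, not something that follows from the results quoted in the paper.

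The paper's proof avoids iteration entirely, and this is the idea your argument is missing. One applies the intersection form only to a \emph{single} application of $\phi^m\psi$ (and of $\psi^{-1}\phi^{-m}$), where the eigenvector relations $T_+\phi = \lambda_+T_+$ and $T_-\phi^{-1}=\lambda_-T_-$ give exact inequalities $\la T_+,\phi^m\psi\mu\ra\geq\lambda^m\la T_+,\psi\mu\ra$ and $\la T_-\psi,\psi^{-1}\phi^{-m}\mu\ra\geq\lambda^m\la T_-,\mu\ra$. Setting $\alpha(\mu)=\max\{\la T_+,\mu\ra,\la T_-\psi,\mu\ra\}$ and $\beta(\mu)=\max\{\la T_+,\psi\mu\ra,\la T_-,\mu\ra\}$, the hypothesis $[\psi\mu_+]\neq[\mu_-]$ together with Theorem \ref{th:ergodic} shows $\beta>0$ on $Curr(F_k)-\{0\}$, and compactness of $\BP Curr(F_k)$ yields a uniform $K$ with $K\beta>\alpha$. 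For $\lambda^m\geq K$ one then gets $\max\{\alpha(\phi^m\psi\mu),\alpha(\psi^{-1}\phi^{-m}\mu)\}>\alpha(\mu)$ for every nonzero $\mu$, so $\alpha$ has no maximum on any orbit of $\phi^m\psi$ and hence there is no finite orbit --- in particular no fixed counting current $\eta_{h}$, i.e.\ no periodic conjugacy class. Your contradiction framework and your use of the equivalence $[\psi\mu_+]\neq[\mu_-]\Leftrightarrow[T_+\psi]\neq[T_-]$ are sound, but without the Fathi-style maximum trick (or a genuinely new uniform dynamics estimate) the proposal does not close.
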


\begin{proof}
  Let $\lambda_+$ and $\lambda_-$ be the expansion factors for $\phi$
  and $\phi^{-1}$ respectively, and let $\lambda = \min \{ \lambda_+,
  \lambda_-\} > 1$.  Also let $T_+$ and $T_-$ be representatives of
  the stable and unstable trees for $\phi$ in $\overline{cv}_k$.  Thus
  $T_+\phi = \lambda_+T_+$ and $T_-\phi^{-1} = \lambda_-T_-$.

  Hence for each $m \geq 0$ and any $\mu \in Curr(F_k)$ we have:
  \begin{align*}
    \la T_+,\phi^m \psi \mu \ra & = \la T_+\phi^m, \psi\mu
    \ra \geq \lambda^m \la T_+, \psi\mu \ra, \mbox{ and} \\
    \la T_-\psi, \psi^{-1}\phi^{-m} \mu \ra & = \la T_-,\phi^{-m} \mu
    \ra = \la T_-\phi^{-m}, \mu \ra \geq \lambda^m \la T_-,\mu \ra.
  \end{align*}
  Now define $\alpha(\mu) = \max\{\la T_+, \mu \ra, \la T_-\psi , \mu
  \ra \}$.  Then:
  \begin{align*}
    \alpha(\phi^m\psi\mu) & \geq \la T_+, \phi^m\psi \mu \ra \geq
    \lambda^m \la T_+, \psi\mu \ra, \mbox{ and} \\
    \alpha(\psi^{-1}\phi^{-m}\mu) & \geq \la T_-\psi,
    \psi^{-1}\phi^{-m} \mu \ra \geq \lambda^m \la T_-, \mu \ra.
  \end{align*}
  Hence $\max\{ \alpha(\phi^m\psi\mu),\alpha(\psi^{-1}\phi^{-m}\mu) \}
  \geq \lambda^m\beta(\mu)$, where $\beta(\mu) = \max\{ \la T_+,
  \psi\mu \ra, \la T_-,\mu \ra \}$.  Now $\beta(\mu) = 0$ if and only
  if both $\la T_+,\psi\mu \ra$ and $\la T_-,\mu \ra$ are equal to 0.
  Applying the ``unique-ergodicity'' (Theorem \ref{th:ergodic}), we
  have that if $\mu \neq 0$ then $\la T_-, \mu \ra = 0$ if and only if
  $[ \mu ] = [\mu_+ ]$, and $\la T_+,\psi\mu_+ \ra = 0$ if and only if
  $[\psi\mu_+] = [\mu_-]$.  By assumption $[\psi\mu_+] \neq [\mu_-]$, and
  hence $\beta(\mu)$ is strictly positive.  Therefore
  $\alpha(\mu)/\beta(\mu)$ defines a continuous function on $\BP
  Curr(F_k)$.  Since $\BP Curr(F_k)$ is compact, there is a constant
  $K$ such that $\alpha(\mu)/\beta(\mu) < K$ for all $\mu \in
  Curr(F_k) - \{ 0 \}$, i.e., $K\beta(\mu) > \alpha(\mu)$.  For $m$
  such that $\lambda^m \geq K$, we obtain:
  \[ \forall \mu \in Curr(F_k) - \{ 0 \}, \quad \max \{
  \alpha(\phi^m\psi\mu), \alpha(\psi^{-1}\phi^{-m}\mu) \} >
  \alpha(\mu). \] It is now easy to see that $\phi^m\psi$ acts on
  $Curr(F_k) - \{ 0 \}$ without a periodic orbit.

  Notice that if $\theta \in \Out F_k$ has a periodic conjugacy class,
  say $\theta^\ell$ fixes the conjugacy class of $c$, then
  $\theta^\ell\eta_c = \eta_{\theta^\ell(c)} = \eta_c$, and hence
  $\theta$ acts on $Curr(F_k) -\{ 0 \}$ with a periodic orbit.  Thus
  as $\phi^m\psi$ acts on $Curr(F_k) - \{ 0 \}$ without a periodic
  orbit it does not have a periodic conjugacy class, i.e.,
  $\phi^m\psi$ is hyperbolic.
\end{proof}


\section{Producing fully irreducible automorphisms}\label{sc:fi}

In this section we show how to produce a fully irreducible element of
$\Out F_k$ with a specified action on $H_1(F_k,\BZ)$.  This involves
examining the dynamics of elements on the $\delta$--hyperbolic
Bestvina--Feighn complex $\CX$ from Theorem \ref{th:complex}.  We
begin with a theorem about the isometries of $\delta$--hyperbolic
spaces.  Even though the space we will ultimate consider has a right
action, we will consider the more customary setting where the space
has a left action; it is clear how to convert a right action into a
left action.

We recall some basics about $\delta$--hyperbolic spaces needed for
this section.  Some references for this material are
\cite{col:HG91,bk:BH99,col:Gr87}.

A geodesic metric space $X$ is called \emph{$\delta$--hyperbolic} if
for any geodesic triangle in $X$, the $\delta$--neighborhood of the
union of any two of the sides contains the third.  There are various
other equivalent notions.  There is an \emph{inner product} defined
for points $x,y \in X$ by:
\[ (x . y)_w = \frac{1}{2}(d(x,w) + d(w,y) - d(x,y)) \] for a given
basepoint $w \in X$.  Associated to a $\delta$--hyperbolic space is a
\emph{boundary} $\partial X$ which compactifies $X$ as $X
\cup \partial X$ when $X$ is locally compact.  One definition of
$\partial X$ is as equivalence classes of sequences $\{ x_i \}$ with
$\lim_{i,j \to \infty} (x_i.x_j) = \infty$ (the inner product is defined with
respect to some basepoint), the equivalence relation is defined by $\{
x_i \} \sim \{ y_i \}$ if $\lim_{i \to \infty} (x_i.y_i) = \infty$.
If $f$ is an isometry of $X$ with nonzero translation length
(i.e.,~$\lim_{n \to \infty} \frac{1}{n}d(x,f^n(x)) > 0$ for all $x \in
X$), then the action of $f$ extends to a continuous action on
$\partial X$ with exactly two fixed points.  One fixed point is
represented by the sequence $\{ f^n(x) \}$ for any $x \in X$; the
other is represented by $\{ f^{-n}(y) \}$ for any $y \in X$.  These
points are called the \textit{attracting and repelling fixed points}
of $f$ respectively.

\begin{theorem}\label{th:hyperbolic-action}
  Suppose $X$ is a $\delta$--hyperbolic space and $f \in \Isom(X)$
  acts on $X$ with nonzero translation length, with attracting and
  respectively repelling fixed points $A_+$ and $A_-$ in $\partial X$.
  If $g \in \Isom(X)$ acts on $X$ such that $gA_+ \neq A_-$, then
  there is an $M \geq 0$ such that for $m \geq M$ the element $f^mg$
  acts on $X$ with nonzero translation length.
\end{theorem}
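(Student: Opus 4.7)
The plan is to run a ping-pong argument on $\partial X$, in the spirit of the Fathi-style proof of Proposition \ref{prop:hyperbolic} but with Gromov products replacing the intersection form. Set $Q := g^{-1}A_-$; the hypothesis $gA_+ \neq A_-$ is equivalent to $A_+ \neq Q$, so $A_+$ and $Q$ are distinct points of $\partial X$.

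Fixing a basepoint $w \in X$, I would use Gromov-product sublevel sets to pick disjoint neighborhoods $U_+ \ni A_+$ and $U_Q \ni Q$ in $X \cup \partial X$. Since $g$ extends continuously to $\partial X$ and $gQ = A_-$, after shrinking $U_Q$ its image $gU_Q$ lies inside a preassigned neighborhood $V_- \ni A_-$. The North--South dynamics of $f$ on $\partial X$---a standard consequence of its nonzero translation length---then gives, for all sufficiently large $m$, that $f^m$ carries $(X \cup \partial X) \setminus V_-$ into $U_+$ while $f^{-m}$ carries $(X \cup \partial X) \setminus U_+$ into $V_-$ (after possibly shrinking $V_-$ further). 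Composing with $g$ and $g^{-1}$ respectively produces the ping-pong
\[
  h\bigl((X \cup \partial X) \setminus U_Q\bigr) \subset U_+, \qquad h^{-1}\bigl((X \cup \partial X) \setminus U_+\bigr) \subset U_Q
\]
for $m \geq M$, where $h := f^m g$.

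Iterating $h$ on any $x_0 \in X \setminus U_Q$ traps the orbit $\{h^n x_0\}_{n \geq 1}$ inside $U_+$, with Gromov products $(h^n x_0 . A_+)_w$ growing unboundedly in $n$. A standard $\delta$-hyperbolic estimate---using the thin-triangle inequality on triangles with vertices $w$, $h^n x_0$, $h^{n+1} x_0$---then converts this into a linear lower bound on $d(w, h^n x_0)$, showing that $h$ has positive translation length.

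The main technical obstacle is this final linear-growth step. Because the intended target (the Bestvina--Feighn complex $\CX$) is not assumed locally compact, one cannot extract a fixed point of $h$ in $\overline{U_+}$ by compactness alone; one has to argue quantitatively with Gromov products and the thin-triangle inequality. But this is a classical computation once the ping-pong data above are in hand, so the argument reduces the theorem to setting up the ping-pong correctly.
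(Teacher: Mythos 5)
Your overall strategy (separate $A_+$ from $g^{-1}A_-$ and play ping-pong) is in the same spirit as the paper's argument, but the step that actually produces \emph{positive translation length} has a genuine gap. Trapping the forward orbit $\{h^nx_0\}$ in a fixed neighborhood $U_+$ of $A_+$ does not force the Gromov products $(h^nx_0\,.\,A_+)_w$ to grow unboundedly -- ping-pong gives containment in $U_+$, not convergence to $A_+$, and upgrading containment to convergence needs a quantitative contraction statement (a nested family $h(U_+^{(n)})\subset U_+^{(n+1)}$), not just one invariant set. Worse, even if $(h^nx_0\,.\,A_+)_w\to\infty$, that only shows $d(w,h^nx_0)\to\infty$, which is compatible with zero translation length (parabolic-type isometries converge to a boundary point with sublinearly growing orbits). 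So the ``classical computation'' you defer to at the end is precisely the missing content. The correct quantitative input is two-sided: one needs a uniform bound $(h^nx_0\,.\,h^{-n}x_0)_{x_0}\leq C$ for all $n$ (coming from the separation of $U_+$ and $U_Q$), \emph{together with} the fact that the initial displacement $d(x_0,hx_0)=d(x_0,f^mg(x_0))$ can be made larger than $2C$ by increasing $m$; only then does the doubling inequality $d(x_0,h^{2n}x_0)\geq 2d(x_0,h^nx_0)-2C$ bootstrap to linear growth. This is exactly what the paper packages differently: Lemma \ref{lm:quasi-geodesic} shows the concatenation $\alpha_m\cdot f^mg(\alpha_m)$ is a $(1,\epsilon)$--quasi-geodesic with $\epsilon$ independent of $m$ (using boundedness of $(f^{-m}(x)\,.\,gf^mg(x))_{g(x)}$, the analogue of your separation constant), and Theorem \ref{th:local-qg} (local-to-global for quasi-geodesics) plays the role of the bootstrap once $L_m=d(x,f^mg(x))$ exceeds the threshold $R$.

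Two smaller points. First, your ping-pong containments are stated backwards: to get $h\bigl((X\cup\partial X)\setminus U_Q\bigr)\subset U_+$ you need $g\bigl((X\cup\partial X)\setminus U_Q\bigr)\subset (X\cup\partial X)\setminus V_-$, i.e.\ $g^{-1}V_-\subset U_Q$, not $gU_Q\subset V_-$; this is fixable by shrinking $V_-$ instead of $U_Q$. Second, you invoke uniform North--South dynamics of $f$ on all of $X\cup\partial X$; since the Bestvina--Feighn complex is not proper, this is not off-the-shelf and requires a quasi-axis argument of roughly the same difficulty as the paper's direct geodesic-concatenation estimate. Neither of these is fatal, but the final linear-growth step as written does not close.
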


Before proving this theorem we need a lemma that allows us to locally
build uniform quasi-geodesics.  Recall that a
\textit{$(\lambda,\epsilon)$--quasi-geodesic} is a function $\alpha\co
[a,b] \to X$ such that for all $t,t' \in [a,b]$ we have:
\[ \frac{1}{\lambda}|t - t'| - \epsilon \leq d(\alpha(t),\alpha(t'))
\leq \lambda |t - t'| + \epsilon.  \] We allow for the possibility that
the domain of $\alpha$ is $\BR$ or $\BR_{\geq 0}$.  A function
$\alpha\co[a,b] \to X$ is an \textit{$L$--local
  $(\lambda,\epsilon)$--quasi-geodesic} if for all $a \leq a' \leq b'
\leq b$ where $b' - a' \leq L$, the function $\alpha \big|_{[a',b']}$
is a $(\lambda,\epsilon)$--quasi-geodesic. First we recall a standard
fact about $\delta$--hyperbolic spaces.

\begin{lemma}[\cite{bk:BH99}, Chapter III.H Lemma
  1.15]\label{lm:BH-lemma}
  Let $X$ be a $\delta$--hyperbolic space, and let $c_1\co [0,T_1] \to X$
  and $c_2\co[0,T_2] \to X$ be geodesics such that $c_1(0) = c_2(0)$.
  Let $T = \max\{T_1,T_2 \}$ and extend the shorter geodesic to
  $[0,T]$ by the constant map.  If $K = d(c_1(T),c_2(T))$, then
  $d(c_1(t),c_2(t)) \leq 2(K + 2\delta)$ for all $t \in [0,T]$.
\end{lemma}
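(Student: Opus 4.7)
Without loss of generality assume $T_1 \leq T_2 = T$, so $c_1$ is extended by the constant value $c_1(T_1)$ on $[T_1,T]$. Form the honest geodesic triangle $\Delta$ with vertices $p = c_1(0)=c_2(0)$, $x = c_1(T_1)$, $y = c_2(T)$: its sides are $c_1|_{[0,T_1]}$, $c_2|_{[0,T]}$ and a chosen geodesic $[x,y]$ of length exactly $K = d(x,y)$. The whole proof is a case analysis driven by the slim-triangle property of $\Delta$: any point on one side lies within $\delta$ of the union of the other two.

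First handle $t \in [0,T_1]$ by applying slimness to $c_1(t)$. If the $\delta$-close companion lies on $c_2$, say at $c_2(s)$, then
\[
|s - t| = |d(p,c_1(t)) - d(p,c_2(s))| \leq d(c_1(t),c_2(s)) \leq \delta,
\]
so $d(c_1(t),c_2(t)) \leq \delta + |s-t| \leq 2\delta$. If instead the companion $q$ lies on $[x,y]$, then $d(c_1(t),y) \leq \delta + d(q,y) \leq \delta + K$; combined with $d(p,c_1(t)) = t$ and $d(p,y) = T$, the triangle inequality yields $T - t \leq \delta + K$, and therefore
\[
d(c_1(t),c_2(t)) \leq d(c_1(t),y) + d(y,c_2(t)) \leq (\delta+K) + (T - t) \leq 2(K+\delta).
\]

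Next handle $t \in [T_1,T]$, where $c_1(t) = x$. Apply slimness of $\Delta$ to the point $c_2(t)$. If its $\delta$-close companion lies on $c_1$, say at $c_1(u)$ with $u \in [0,T_1]$, then $|u - t| \leq \delta$, and because $u \leq T_1 \leq t$ this forces $T_1 - u \leq \delta$, giving
\[
d(x,c_2(t)) \leq d(x,c_1(u)) + d(c_1(u),c_2(t)) \leq (T_1 - u) + \delta \leq 2\delta.
\]
If the companion lies on $[x,y]$ at some $q$, then $d(x,c_2(t)) \leq d(x,q) + \delta \leq K + \delta$. Collecting all four sub-cases gives the claimed bound $d(c_1(t),c_2(t)) \leq 2(K+2\delta)$ throughout $[0,T]$.

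The only real subtlety is the bookkeeping once the fellow-traveler of $c_1(t)$ (or of $c_2(t)$) jumps onto the third side $[x,y]$: there one has to bound the remaining distance along $c_2$ back to $c_2(t)$, and the correct replacement for fellow-traveling is the inequality $T - t \leq K + \delta$, which I expect to be the main thing to double-check. Everything else is a direct application of slim triangles together with the elementary observation that two parameter values on a single geodesic that are $\delta$-close in the space must be $\delta$-close as parameters.
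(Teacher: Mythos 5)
Your proof is correct. Note that the paper itself gives no proof of this lemma --- it is quoted verbatim from Bridson--Haefliger (Chapter III.H, Lemma 1.15) as a standard fact --- so there is nothing in the paper to compare against; your slim-triangle case analysis on the triangle $\Delta(p,x,y)$ is the standard argument, all four sub-cases check out (in particular the key estimate $T-t\le K+\delta$ in the sub-case where the companion of $c_1(t)$ lands on $[x,y]$ follows correctly from $T=d(p,y)\le t+d(c_1(t),y)$), and you in fact obtain the slightly sharper bound $2(K+\delta)$, which of course implies the stated $2(K+2\delta)$.
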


The next lemma shows us that the sequence of points $(f^mg)^n(x)$
defines a local quasi-geodesic with uniform constants.

\begin{lemma}\label{lm:quasi-geodesic}
  Let $X$, $f$ and $g$ be as in Theorem \ref{th:hyperbolic-action}.
  Fix $x \in X$ and for $m \geq 0$, let $\alpha_m$ be a geodesic
  connecting $x$ to $f^mg(x)$.  Then there is an $\epsilon \geq 0$
  such that for $m \geq 0$ the concatenation of the geodesics
  $\alpha_m \cdot f^mg(\alpha_m)$ is a $(1,\epsilon)$--quasi-geodesic.
\end{lemma}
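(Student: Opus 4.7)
The plan is to reduce the quasi-geodesic claim to a uniform bound on the Gromov product at the concatenation point, and then to exploit the hypothesis $gA_+ \neq A_-$ via the boundary extensions of $f$ and $g$.

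First I would reduce verification of the $(1,\epsilon)$--quasi-geodesic property to bounding a single Gromov product. Write $x_m = f^m g(x)$ and $y_m = (f^m g)^2(x)$, so that $\alpha_m \cdot f^m g(\alpha_m)$ is a broken geodesic from $x$ to $y_m$ through $x_m$. If $p$ lies on $\alpha_m$ at distance $\sigma$ from $x_m$, and $q$ lies on $f^m g(\alpha_m)$ at distance $\tau$ from $x_m$, then applying the triangle inequality to the quadruple $x,p,q,y_m$ and substituting $d(x,p) = d(x,x_m) - \sigma$ and $d(q,y_m) = d(x_m,y_m) - \tau$ gives
\[ d(p,q) \geq \sigma + \tau - 2(x . y_m)_{x_m}. \]
The arc-length between $p$ and $q$ along the concatenation is exactly $\sigma + \tau$, so to get a uniform $(1,\epsilon)$--quasi-geodesic it suffices to prove that $K := \sup_{m \geq 0}(x . y_m)_{x_m}$ is finite, after which $\epsilon = 2K$ will work.

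Next I would transport this Gromov product to the fixed basepoint $x$ using that $f^m g$ is an isometry:
\[ (x . y_m)_{x_m} = \bigl((f^m g)^{-1}(x) . f^m g(x)\bigr)_x = \bigl(g^{-1} f^{-m}(x) . f^m g(x)\bigr)_x. \]
As $m \to \infty$, the second argument converges in $X \cup \partial X$ to $A_+$, since $A_+$ is the attracting fixed point of $f$. Since $f^{-m}(x) \to A_-$ (the attracting fixed point of $f^{-1}$) and the isometry $g^{-1}$ extends continuously to $\partial X$, the first argument converges to $g^{-1}(A_-)$. The hypothesis $gA_+ \neq A_-$ is equivalent to $A_+ \neq g^{-1}(A_-)$, so these two limit points are distinct in $\partial X$.

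Finally, the standard fact that in a $\delta$--hyperbolic space two sequences converging to distinct boundary points have bounded Gromov product at any fixed basepoint yields a uniform bound on $(g^{-1} f^{-m}(x) . f^m g(x))_x$ for all sufficiently large $m$. Since each individual Gromov product is a finite number, taking the supremum over all $m \geq 0$ gives a finite $K$, and hence the desired $\epsilon$. The main subtlety, rather than a real obstacle, is to identify the correct limit points in $\partial X$ and to see that they are matched by the hypothesis on $g$; once this is in place the estimate is essentially formal, using only the triangle inequality and the defining property of the Gromov boundary.
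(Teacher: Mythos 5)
Your proof is correct, and its first half coincides with the paper's: both arguments hinge on the uniform boundedness of the same Gromov product --- your $\bigl(g^{-1}f^{-m}(x)\,.\,f^mg(x)\bigr)_x$ is exactly the paper's $\bigl(f^{-m}(x)\,.\,gf^mg(x)\bigr)_{g(x)}$ after translating by the isometry $g$ --- and both deduce this boundedness from $gA_+\neq A_-$ via convergence to distinct boundary points. Where you diverge is in how that bound is converted into the quasi-geodesic estimate. The paper first lower-bounds $d(x,(f^mg)^2(x))$ by $2d_m-2C$, then locates the midpoint of a geodesic from $x$ to $(f^mg)^2(x)$ within $C+2\delta$ of the break point $f^mg(x)$ using a thin triangle, and finally invokes Lemma \ref{lm:BH-lemma} to compare the broken path with that geodesic, arriving at $\epsilon = 6C+16\delta$. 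You instead observe that a concatenation of two geodesics meeting at $x_m$ is automatically a $(1,2K)$--quasi-geodesic whenever the Gromov product of its endpoints based at $x_m$ is at most $K$; this is a purely metric computation with the triangle inequality and uses no hyperbolicity at all. Your route is shorter, avoids Lemma \ref{lm:BH-lemma} entirely, and isolates the single place where hyperbolicity enters, namely the boundary argument bounding the Gromov product. The only step you leave as a ``standard fact'' --- that sequences converging to distinct points of $\partial X$ have uniformly bounded Gromov products at a fixed basepoint --- is asserted with the same level of detail in the paper itself, so no gap is introduced there.
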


\begin{proof}
  Let $\beta_m = \alpha_m \cup f^mg(\alpha_m)$, $d_m = d(x,f^mg(x))$
  and consider the points $g(x)$, $f^{-m}(x)$ and $gf^mg(x)$.  Notice
  $f^{-m}(\beta_m)$ is a path from $f^{-m}(x)$ to $gf^mg(x)$ passing
  through $g(x)$; see Figure \ref{fig:hyperbolic}. As $gA_+ \neq A_-$,
  the inner product $(f^{-m}(x).gf^mg(x))_{g(x)}$ stays bounded as $m
  \to \infty$.  Hence there is a constant $C \geq 0$ that does not
  depend on $m$ such that:
  \begin{align*}
    d(x,f^mgf^mg(x)) & = d(f^{-m}(x),gf^mg(x)) \\
    & \geq d(f^{-m}(x),g(x)) + d(g(x),gf^mg(x)) -
    2C \\
    & = 2d_m - 2C.
  \end{align*}
  \begin{figure}[t]
    \centering
    \iftrue
      \psfrag{+}{$gA_+$}
      \psfrag{-}{$A_-$}
      \psfrag{a}{$gf^mg(x)$}
      \psfrag{b}{$f^{-m}(x)$}
      \psfrag{c}{$g(x)$}
      \psfrag{d}{$f^m$}
      \psfrag{z}{$z$}
      \psfrag{e}{$f^mgf^mg(x)$}
      \psfrag{x}{$x'$}
      \psfrag{f}{$f^mg(x)$}
      \psfrag{g}{$x$}
      \psfrag{A}{$\alpha_m$}
      \psfrag{D}{$\delta$}
      \psfrag{C}{$C+2\delta$}
      \includegraphics{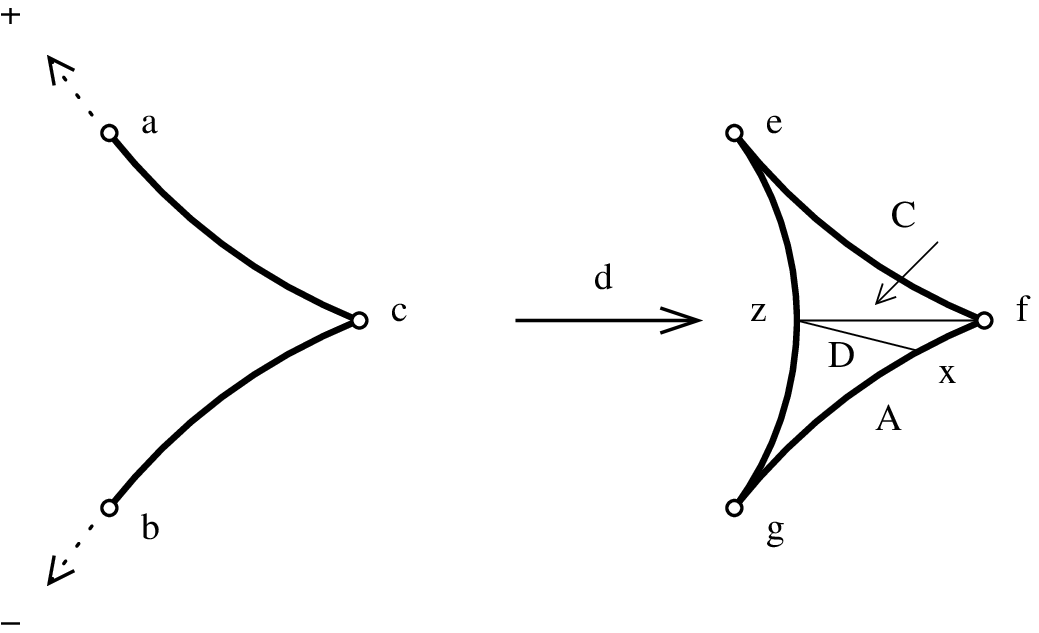}
    \fi
    \caption{The geodesics in Lemma \ref{lm:quasi-geodesic}.}
    \label{fig:hyperbolic}
  \end{figure}

  Fix a geodesic $c$ from $x$ to $f^mgf^mg(x)$, and let $z$ be the
  midpoint on $c$.  As $X$ is $\delta$--hyperbolic, there is an $x' \in
  \beta_m$ such that $d(z,x') \leq \delta$.  Without loss of
  generality we can assume that $x' \in \alpha_m$.  Thus:
  \[ d(x',x) \geq d(x,z) - \delta \geq d_m - C -
  \delta \]
  and therefore:
  \[ d(x',f^mg(x)) = d_m - d(x,x') \leq C + \delta, \] from which we
  conclude $d(z,f^mg(x)) \leq C + 2\delta$.  Let $d'_m =
  d(x,f^mgf^mg(x))$ and define $c_z\co [0,d_m] \to X$ by $c_z(t) =
  c(t)$ if $0 \leq t \leq \frac{1}{2}d'_m$ and $c_z(t) = z$ otherwise.
  Then by Lemma \ref{lm:BH-lemma} we have for $0 \leq t \leq d_m$ that
  $d(\beta_m(t),c_z(t)) \leq 2(C + 4\delta)$.  Similarly define
  $c_z'\co [d_m,2d_m] \to X$ by $c_z'(t) = z$ if $d_m \leq t \leq 2d_m
  - \frac{1}{2}d'_m$ and $c_z'(t) = c(t + d'_m - 2d_m)$ otherwise.
  Then another application of Lemma \ref{lm:BH-lemma} shows that for
  $d_m \leq t \leq 2d_m$ we have $d(\beta_m(t),c_z'(t)) \leq 2(C +
  4\delta)$.  Notice that if $0 \leq t \leq d_m \leq t' \leq 2d_m$
  then:
  \[ (t' - t) - 2C \leq d(c_z(t),c'_z(t')) \leq (t' - t) \] as $2d_m -
  d'_m \leq 2C$. Therefore if $0 \leq t \leq d_m \leq t' \leq 2d_m$
  then:
  \[ (t' - t) - (6C + 16\delta) \leq d(\beta_m(t),\beta_m(t')) \leq
  (t' - t). \] The other cases ($0 \leq t \leq t' \leq d_m$ or $d_m
  \leq t \leq t' \leq 2d_m$) are clear since $\alpha_m$ is a geodesic.
\end{proof}

Now to complete the proof of Theorem \ref{th:hyperbolic-action} we
need the following theorem.  

\begin{theorem}[\cite{bk:BH99}, Chapter III.H Theorems 1.7 \&
  1.13]\label{th:local-qg}
  Let $X$ be a $\delta$--hyperbolic space, and let $\gamma\co [a,b] \to
  X$ be an $L$--local $(\lambda,\epsilon)$--quasi-geodesic.  Then
  there is an $R = R(\delta,\lambda,\epsilon)$ such that if $L > R$,
  then for some $\lambda' \geq 1$ and $\epsilon' \geq 0$, the path
  $\gamma$ is a $(\lambda',\epsilon')$--quasi-geodesic.
\end{theorem}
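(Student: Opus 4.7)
The plan is to prove this classical local-to-global principle in two stages: first establish that an $L$-local $(\lambda,\epsilon)$-quasi-geodesic stays in a bounded neighborhood of any geodesic joining its endpoints, and then upgrade this stability to the global quasi-geodesic bounds.

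\emph{Stage 1 (Stability).} Fix a geodesic $\sigma$ from $\gamma(a)$ to $\gamma(b)$ and set $D = \sup_{t \in [a,b]} d(\gamma(t),\sigma)$, attained (or almost attained) at some $t_0 \in [a,b]$. I would then choose $t_1 \leq t_0 \leq t_2$ in $[a,b]$ with $t_2 - t_1 \leq L$ and with $\gamma(t_1), \gamma(t_2)$ both reasonably close to $\sigma$; such a choice is possible because $\gamma(a), \gamma(b) \in \sigma$ force the function $u \mapsto d(\gamma(u),\sigma)$ to be small at the endpoints of $[a,b]$. On $[t_1,t_2]$ the restriction of $\gamma$ is a genuine $(\lambda,\epsilon)$-quasi-geodesic, so the classical stability lemma for quasi-geodesics in $\delta$-hyperbolic spaces furnishes a Hausdorff bound $H_0 = H_0(\delta,\lambda,\epsilon)$ between $\gamma|_{[t_1,t_2]}$ and a geodesic $\sigma'$ from $\gamma(t_1)$ to $\gamma(t_2)$. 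A thin-quadrilateral estimate then places $\sigma'$ near $\sigma$, and combining the two estimates bounds $D$ by a constant depending only on $\delta,\lambda,\epsilon$, provided $L$ is large enough for the construction to apply.

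\emph{Stage 2 (Global bounds).} I would then apply Stage 1 to every restriction $\gamma|_{[s,t]}$, concluding that its image lies in the $H$-neighborhood of any geodesic $\sigma_{s,t}$ from $\gamma(s)$ to $\gamma(t)$, with $H = H(\delta,\lambda,\epsilon)$ independent of $s,t$. The upper bound $d(\gamma(s),\gamma(t)) \leq \lambda'(t-s) + \epsilon'$ is routine: subdivide $[s,t]$ into $\lceil (t-s)/L \rceil$ subintervals of length at most $L$, apply the local $(\lambda,\epsilon)$-quasi-geodesic property on each, and combine via the triangle inequality. For the lower bound, the closest-point projection of $\gamma$ onto $\sigma_{s,t}$ makes definite forward progress on each such subinterval --- each contributes at least $L/\lambda - \epsilon$ along $\sigma_{s,t}$ up to backtracking of size controlled by $H$ and $\delta$ --- so the length of $\sigma_{s,t}$, which equals $d(\gamma(s),\gamma(t))$, grows at least linearly in $t - s$.

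The main obstacle is Stage 1. The threshold $R = R(\delta,\lambda,\epsilon)$ must be chosen large enough that the subpath $\gamma|_{[t_1,t_2]}$ is long enough both to be a genuine quasi-geodesic and to actually witness the maximal deviation $D$. The delicate point is the extremal choice of $t_1, t_2$ that makes the stability lemma interact correctly with the thin-quadrilateral bound comparing $\sigma'$ to $\sigma$; calibrating this interaction is what forces $L > R(\delta,\lambda,\epsilon)$ and ultimately yields a bound on $D$ depending only on $\delta,\lambda,\epsilon$.
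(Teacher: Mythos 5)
The paper offers no proof of this statement---it is quoted directly from Bridson--Haefliger (III.H, Theorems 1.7 and 1.13) and even the commented-out appendix leaves the proof environment empty---so the only question is whether your sketch correctly proves the classical local-to-global principle. Your architecture (stability relative to a geodesic with the same endpoints, then the upgrade to global constants) is the standard one, and Stage 2 is essentially routine once Stage 1 is in hand, modulo the coarse monotonicity of nearest-point projection that you wave at.

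The genuine gap is in Stage 1, exactly where you say the delicacy lies. You choose $t_1 \le t_0 \le t_2$ with $t_2 - t_1 \le L$ and claim $\gamma(t_1),\gamma(t_2)$ can be taken ``reasonably close to $\sigma$'' because $d(\gamma(u),\sigma)$ is small at $u=a,b$. That justification fails: $t_0$ may sit in the middle of a very long domain, and the behavior at $a,b$ says nothing about $d(\gamma(u),\sigma)$ for $u$ within $L$ of $t_0$. The fact you actually need---and never invoke---is that $d(\gamma(t_1),\sigma), d(\gamma(t_2),\sigma) \le D$ \emph{by the extremal choice of $t_0$} (take $t_1 = \max\{a, t_0 - L/2\}$, $t_2 = \min\{b, t_0 + L/2\}$). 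With only this weaker control, the thin-quadrilateral step does not ``place $\sigma'$ near $\sigma$'': it places $\sigma'$ within $2\delta$ of the union of a subsegment of $\sigma$ and two vertical sides of length up to $D$, so naively combining your two estimates gives $D \le H_0 + 2\delta + D$, which is vacuous. The argument must be run as a bootstrap in which $D$ cancels. If the point $x' \in \sigma'$ with $d(\gamma(t_0),x') \le H_0$ is $2\delta$-close to a vertical side, say to $w$ on the segment from $\gamma(t_1)$ to its projection $y'' \in \sigma$, then
\[
D = d(\gamma(t_0),\sigma) \le H_0 + 2\delta + \bigl(D - d(\gamma(t_1),w)\bigr),
\qquad
d(\gamma(t_1),w) \ge \tfrac{L}{2\lambda} - \epsilon - H_0 - 2\delta,
\]
so the $D$'s cancel and one obtains $L \le 2\lambda(2H_0 + 4\delta + \epsilon)$; hence for $L$ above this threshold $x'$ must instead be $2\delta$-close to $\sigma$ itself, giving $D \le H_0 + 2\delta$. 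Without identifying this cancellation, Stage 1 does not close, and since Stage 2 depends on it, the proof as written has a hole precisely where the hypothesis $L > R(\delta,\lambda,\epsilon)$ must enter.
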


We can now give a proof of Theorem \ref{th:hyperbolic-action}.

\begin{proof}[Proof of Theorem \ref{th:hyperbolic-action}]
  Fix $x \in X$, and let $\epsilon$ be given from Lemma
  \ref{lm:quasi-geodesic} and let $R = R(\delta,1,\epsilon)$ be the
  constant from Theorem \ref{th:local-qg}.  As $f$ has nonzero translation length, for $m \geq 0$ we can let $L_m =
  d(x,f^mg(x)) \geq d(g(x),f^mg(x)) - d(x,g(x)) \geq mt - d(x,g(x))$
  for some $t > 0$.  Let $M$ be
  such that $L_M > R$.  As in Lemma \ref{lm:quasi-geodesic}, let
  $\alpha_m$ be a geodesic connecting $x$ to $f^mg(x)$, and let $\beta_m =
  \alpha_m \cdot f^mg(\alpha)$.  Then define a path $\gamma\co
  [0,\infty) \to X$ by:
  \[ \gamma = \beta_m \bigcup_{f^mg(\alpha_m)} f^mg(\beta_m)
  \bigcup_{(f^mg)^2(\alpha_m)} (f^mg)^2(\beta_m) \cdots \] By Lemma
  \ref{lm:quasi-geodesic}, $\gamma$ is an $L_m$--local
  $(1,\epsilon)$--quasi-geodesic and hence if $m \geq M$ then $\gamma$
  is a $(\lambda',\epsilon')$--quasi-geodesic from some $\lambda' \geq
  1$ and $\epsilon' \geq 0$ by Theorem \ref{th:local-qg}.  Therefore
  for any $x' \in X$ and $\ell \geq 0$ we have:
  \begin{align*}
    d(x',(f^mg)^\ell(x'))  & \geq d(x,(f^mg)^\ell(x)) - 2d(x',x) \\
    & \geq \frac{1}{\lambda'}L_m\ell - \epsilon' - 2d(x',x)
  \end{align*}
and hence $f^mg$ has nonzero translation length.
\end{proof}

The fully irreducible analog of Proposition \ref{prop:hyperbolic}
follows easily from Theorems \ref{th:complex} and
\ref{th:hyperbolic-action}.

\begin{proposition}\label{prop:fi}
  Let $\phi \in \Out F_k$ be a fully irreducible outer automorphism
  with stable and unstable trees $[T_+]$ and $[T_-]$ in
  $\overline{CV}_k$.  Suppose $\psi \in \Out F_k$ is such that
  $[T_+\psi] \neq [T_-]$.  Then there is an $M \geq 0$ such that $m
  \geq M$ the element $\phi^m\psi$ is fully irreducible.
\end{proposition}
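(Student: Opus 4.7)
The plan is to apply Theorem \ref{th:hyperbolic-action} to a Bestvina--Feighn complex on which $\phi$ acts loxodromically, and then to use Theorem \ref{th:complex}(2) to exclude non-fully-irreducible behaviour of $\phi^m\psi$. First, invoke Theorem \ref{th:complex} applied to the collection $\{\phi\}$ to produce a $\delta$--hyperbolic graph $\CX$ equipped with an isometric $\Out F_k$--action on which $\phi$ has nonzero translation length. Let $A_+, A_- \in \partial \CX$ denote its attracting and repelling fixed points. The crux is to verify the hypothesis $\psi A_+ \neq A_-$ required by Theorem \ref{th:hyperbolic-action}.

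Suppose for contradiction that $\psi A_+ = A_-$. Then $\psi\phi\psi^{-1}$ and $\phi^{-1}$ share the attracting fixed point $A_-$ on $\partial \CX$. Because $\phi$ acts on $\CX$ with the WPD property (part of the construction of $\CX$ in \cite{un:BF}), the elementary closure of $\phi$ in $\Out F_k$ is virtually cyclic and contains every loxodromic isometry sharing a boundary fixed point with $\phi$. Hence $\psi\phi\psi^{-1}$ lies in this elementary closure, and some power satisfies $(\psi\phi\psi^{-1})^p = \phi^{-q}$ with $p,q > 0$; equating translation lengths on $\CX$ forces $p = q$, so $\psi\phi^p\psi^{-1} = \phi^{-p}$ in $\Out F_k$, equivalently $\psi\phi^{-p}\psi^{-1} = \phi^p$. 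Pass to the right action on $\overline{CV}_k$: by Theorem \ref{th:NS-cv}, the projective attractors of $\phi^p$ and $\phi^{-p}$ are $[T_+]$ and $[T_-]$ respectively. The identity $\psi\phi^{-p}\psi^{-1} = \phi^p$ asserts that conjugation by $\psi$ carries the right-action dynamics of $\phi^p$ to those of $\phi^{-p}$, sending the attractor $[T_+]$ of $\phi^p$ to the attractor $[T_-]$ of $\phi^{-p}$. Unwinding the right-action convention yields $[T_+\psi] = [T_-]$, contradicting the hypothesis.

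Having verified $\psi A_+ \neq A_-$, Theorem \ref{th:hyperbolic-action} produces $M \geq 0$ such that $\phi^m\psi$ acts on $\CX$ with nonzero translation length for all $m \geq M$. If $\phi^m\psi$ were not fully irreducible, some iterate $(\phi^m\psi)^n$ would preserve the conjugacy class of a proper free factor of $F_k$. But by Theorem \ref{th:complex}(2) the stabilizer of any such conjugacy class has bounded orbits in $\CX$, so $(\phi^m\psi)^n$ would have zero translation length on $\CX$ and hence so would $\phi^m\psi$---a contradiction. Therefore $\phi^m\psi$ is fully irreducible for every $m \geq M$.

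The principal obstacle is the translation carried out in the second paragraph: passing from the boundary condition $\psi A_+ = A_-$ on $\partial \CX$ to the equality $[T_+\psi] = [T_-]$ on $\overline{CV}_k$. This step depends on the WPD property of $\phi$ on $\CX$, which yields the algebraic identity $\psi\phi^p\psi^{-1} = \phi^{-p}$ in $\Out F_k$, combined with the Levitt--Lustig north-south dynamics on $\overline{CV}_k$ (Theorem \ref{th:NS-cv}) that pins down the attractors of $\phi^{\pm p}$.
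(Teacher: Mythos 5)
Your proposal is correct and follows essentially the same route as the paper: apply Theorem \ref{th:complex} with the single element $\phi$, use the WPD property together with \cite[Proposition~6]{ar:BF02} to show that $\psi A_+ = A_-$ would force $\psi\phi^r\psi^{-1} = \phi^{-s}$ and hence $[T_+\psi] = [T_-]$, then conclude via Theorem \ref{th:hyperbolic-action} and Theorem \ref{th:complex}(2). Your extra steps (normalizing $p=q$ by translation lengths, and the explicit unwinding of the right-action convention on $\overline{CV}_k$) are harmless elaborations of what the paper states more tersely.
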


\begin{proof}
  Let $\CX$ be the Bestvina--Feighn $\delta$--hyperbolic complex from
  Theorem \ref{th:complex} using $\phi_1 = \phi$ and let $A_+$ and
  $A_-$ denote the attracting and repelling fixed points of $\phi$ in
  $\partial \CX$.  What needs to be shown in order to apply Theorem
  \ref{th:hyperbolic-action} is that $[T_+\psi] \neq [T_-]$ implies
  that $A_+g \neq A_-$.  As the action of $\phi$ on $\CX$ satisfies
  WPD \cite[Proposition~4.27]{un:BF}, if $A_+\psi = A_-$ then for some
  $r,s > 0$ we have $\psi\phi^r\psi^{-1} = \phi^{-s}$
  \cite[Proposition~6]{ar:BF02}.  As the stable and unstable tree for
  positive powers of $\phi$ are the same as for $\phi$, this would
  imply $[T_+\psi] = [T_-]$.

  Now we can apply Theorem \ref{th:hyperbolic-action} to the pair
  $\phi$ and $\psi$ acting on $\CX$ to conclude that for large enough
  $m$, the element $\phi^m\psi$ does not have a bounded orbit and
  hence by \ref{th:complex} is fully irreducible.
\end{proof}

We would like to thank Mladen Bestvina for suggesting the use of WPD
in the above argument.


\section{The stable current for a product of twists}\label{sc:stable}

In this section we examine the qualitative behavior of the stable and
unstable currents associated to a product of Dehn twists.  The main
result is Corollary \ref{co:stable-limit} which produces elements of
$\Out F_k$ satisfying the hypotheses of Propositions
\ref{prop:hyperbolic} and \ref{prop:fi}.  We begin with a simple lemma
describing the change of a conjugacy class in $F_k$ resulting from
powers of a single twist.

\begin{lemma}\label{lm:inequalites}
  Let $T_1$ and $T_2$ be very small cyclic trees with edge stabilizers
  $c_1$ and $c_2$ and associated Dehn twists $\delta_1$ and
  $\delta_2$.  Suppose $\CT_1$ and $\CT_2$ are bases relative to $T_1$
  and $T_2$ respectively such that $c_2$ is cyclically reduced with
  respect to $\CT_1$ and $C$ is the constant from Lemma
  \ref{lm:occurance-cancellation} using these bases.  Then for any $x
  \in F_k$ and $n \geq r > 0$, the following hold.
  \begin{align}
    \la c_1^{\pm r}, \delta_1^n(x) \ra_{\CT_1} & \geq
    (n-r+1)\ell_{T_1}(x) -
    \la c_1^{\pm 1}, x \ra_{\CT_1} \label{al:1} \\
    \ell_{\CT_1}(\delta_1^n(x)) &
    \leq n\ell_{T_1}(x) + \ell_{\CT_1}(x)\label{al:3} \\
    \ell_{\CT_1}(\delta_1^n(x)) & \geq n\ell_{T_1}(x) +
    \ell_{\CT_1}(x) - \la
    c_1^{\pm 1}, x \ra_{\CT_1}\label{al:4} \\
    \la c_1^{\pm 1}, \delta_2^{-n}(x) \ra_{\CT_1} & \leq\ell_{T_2}(x)
    \bigl[ n\la c_1^{\pm 1},c_2 \ra_{\CT_1} + 2C \bigr] + \la c_1^{\pm
      1},x
    \ra_{\CT_1}\label{al:2} \\
    \ell_{\CT_1}(\delta_2^{-n}(x)) & \leq \ell_{T_2}(x)\bigl[
    n\ell_{\CT_1}(c_2) + 2C \bigr] + \ell_{\CT_1}(x)\label{al:5}
  \end{align}
\end{lemma}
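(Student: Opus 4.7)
The plan is to split the five inequalities into two groups according to which twist is applied and in which basis one measures. The inequalities \eqref{al:1}, \eqref{al:3}, and \eqref{al:4} all measure $\delta_1^n(x)$ in the basis $\CT_1$, so these will follow from a direct calculation in a $T_1$-reduced form of $x$. The remaining two, \eqref{al:2} and \eqref{al:5}, measure $\delta_2^{-n}(x)$ in $\CT_1$ but the twist is adapted to $T_2$, so they require a $T_2$-reduced form of $x$ in $\CT_2$ together with a change of basis handled by bounded cancellation.

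For the first group I would start by conjugating $x$ into a $T_1$-reduced form
\[ x = x_1 c_1^{i_1} y_1 c_1^{j_1}\cdots x_m c_1^{i_m} y_m c_1^{j_m}, \qquad 2m = \ell_{T_1}(x). \]
Since $\delta_1$ fixes $\CA$ (and in particular $x_s$ and $c_1$) and conjugates each $y_s\in\CB$ by $c_1^n$, the image
\[ \delta_1^n(x) = x_1 c_1^{i_1+n} y_1 c_1^{j_1-n}\cdots x_m c_1^{i_m+n} y_m c_1^{j_m-n} \]
is still cyclically reduced in $\CT_1$. From this explicit form both the $\CT_1$-length and the count of $c_1^{\pm r}$ occurrences can be read off: in each $c_1$-block of length $|K|\ge r$ there are $|K|-r+1$ occurrences of $c_1^{\pm r}$, and the elementary bounds $n - |i_s| \le |i_s \pm n| \le n + |i_s|$ together with the identities $\sum_s(|i_s|+|j_s|) = \la c_1^{\pm 1}, x\ra_{\CT_1}$ and $\sum_s(|x_s|+|y_s|) = \ell_{\CT_1}(x) - \la c_1^{\pm 1}, x\ra_{\CT_1}$ deliver each of \eqref{al:1}, \eqref{al:3}, and \eqref{al:4} after summing the $2m$ blocks. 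The HNN case is parallel using the corresponding $t_0$-normal form.

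For the second group I would put $x$ in a $T_2$-reduced form in the basis $\CT_2$ and apply $\delta_2^{-n}$ term-by-term, producing a cyclically reduced $\CT_2$-expression of $\delta_2^{-n}(x)$ in which each $c_2$-block has its exponent shifted by $\mp n$. To pass to the basis $\CT_1$, I would rewrite each $\CT_2$-letter as a reduced $\CT_1$-word and apply the Bounded Cancellation Lemma \ref{lm:BCC} at each of the $O(\ell_{T_2}(x))$ block-boundaries; this is the source of the $2C\cdot \ell_{T_2}(x)$ term in both \eqref{al:2} and \eqref{al:5}. On each individual $c_2^K$-block one then uses $\ell_{\CT_1}(c_2^K) = |K|\ell_{\CT_1}(c_2)$ for \eqref{al:5}, and combines Lemma \ref{lm:occurance-cancellation} with $o(c_1^{\pm 1}, c_2^K)_{\CT_1} \le |K|\la c_1^{\pm 1}, c_2\ra_{\CT_1}$ for \eqref{al:2}.

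The main technical obstacle I anticipate is in \eqref{al:2}: after the block-by-block estimate there remain two ``cross'' contributions, a term $\sum_s(|i_s|+|j_s|)\la c_1^{\pm 1}, c_2\ra_{\CT_1}$ coming from the original $c_2$-exponents (before shifting by $\pm n$) and $\sum_s\bigl(o(c_1^{\pm 1}, x_s)_{\CT_1} + o(c_1^{\pm 1}, y_s)_{\CT_1}\bigr)$ coming from the $A$- and $B$-syllables. Both are contributions to $\la c_1^{\pm 1}, x\ra_{\CT_1}$ itself: every $c_1^{\pm 1}$ appearing in the $\CT_1$-spelling of $x$ comes either from the $\CT_1$-spelling of a $c_2^{\pm 1}$ inside a $c_2$-block of $x$ or from a $\CT_2$-letter inside some $x_s$ or $y_s$. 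The plan is to collapse both cross terms into a single $\la c_1^{\pm 1}, x\ra_{\CT_1}$ by applying Lemma \ref{lm:occurance-cancellation} to the $T_2$-reduced expression of $x$ itself, absorbing any leftover boundary cancellation into the constant $2C$.
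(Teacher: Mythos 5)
Your proposal follows essentially the same route as the paper: conjugate $x$ to a $T_1$-reduced (resp.\ $T_2$-reduced) form, read \eqref{al:1}, \eqref{al:3} and \eqref{al:4} directly off the explicit cyclically reduced expression for $\delta_1^n(x)$, and obtain \eqref{al:2} and \eqref{al:5} by counting syllable-by-syllable and absorbing the boundary cancellations --- your two ``cross'' terms --- into $\la c_1^{\pm 1}, x \ra_{\CT_1} + 2C\ell_{T_2}(x)$ via Lemma \ref{lm:occurance-cancellation}. The step you single out as the main obstacle is handled in the paper exactly as you propose.
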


\begin{proof}
  We begin by proving the first three inequalities. By replacing $x$
  by a conjugate we are free to assume that $x$ is $T_1$--reduced as
  all of the quantities involved in the inequalities only depend on the
  conjugacy class of $x$.  If $T_1$ is dual to an amalgamated free
  product we have:
  \[ x = x_1c_1^{i_1}y_1c_1^{j_1}\cdots x_mc_1^{i_m}y_mc_1^{j_m}. \]
  Therefore:
  \[ \delta_1^n(x) = x_1c_1^{i_1 + n}y_1c_1^{j_1 - n}\cdots
  x_mc_1^{i_m + n}y_mc_1^{j_m - n} \] is a cyclically reduced word in
  $\CT_1$.  Hence by only counting the occurrences of $c_1^{\pm r}$
  that appear in the $c_1^{i_s + n}$ and $c_1^{j_s - n}$ we see:
  \begin{align*}
    \la c_1^{\pm r}, \delta_1^n(x) \ra_{\CT_1} & \geq \sum_{s = 1}^m
    \bigl(|i_s + n| - r + 1 \bigr) + \bigl( |j_s - n| - r + 1 \bigr) \\
    & \geq 2m(n - r + 1) - \sum_{s = 1}^m |i_s| + |j_s| \\
    & \geq (n - r + 1)\ell_{T_1}(x) - \la c_1^{\pm 1},x
    \ra_{\CT_1}.
  \end{align*}
  A similar proof works if $T_1$ is dual to an HNN-extension.  This
  shows \eqref{al:1}; the inequalities \eqref{al:3} and \eqref{al:4}
  follow similarly by looking at the given cyclically reduced
  expression for $\delta_1^n(x)$.

  We now prove the last two inequalities.  As before, by replacing $x$
  by a conjugate we are free to assume that $x$ is $T_2$--reduced.  If
  $T_2$ is dual to an amalgamated free product we have:
  \[ x = x_1c_2^{i_1}y_1c_2^{j_1}\cdots x_mc_2^{i_m}y_mc_2^{j_m}. \]
  Therefore:
  \[ \delta_2^{-n}(x) = x_1c_2^{i_1 - n}y_1c_2^{j_1 + n}\cdots
  x_mc_2^{i_m - n}y_mc_2^{j_m + n} \] is a cyclically reduced word in
  $\CT_2$.  Hence by counting the number of occurrences of $c_1^{\pm
    1}$ in the various $x_s$, $y_s$, $c_2^{i_s-n}$ and $c_2^{j_s + n}$
  we see:
  \begin{align*}
    \la c_1^{\pm 1}, \delta_2^{-n}(x) \ra_{\CT_1} & \leq \ell_{T_2}(x)
    o( c_1^{\pm 1}, c_2^n )_{\CT_1}  \\
    & \qquad + \sum_{s = 1}^m o( c_1^{\pm 1}, x_s )_{\CT_1} + o(
    c_1^{\pm 1},c_2^{i_s} )_{\CT_1} + o( c_1^{\pm 1}, y_s
    )_{\CT_1} + o( c_1^{\pm 1} , c_2^{j_s} )_{\CT_1} \\
    & \leq \ell_{T_2}(x)n\la c_1^{\pm 1},c_2 \ra_{\CT_1} + \la
    c_1^{\pm
      1}, x \ra_{\CT_1} + 4mC \\
    & \leq \ell_{T_2}(x) \bigl[ n\la c_1^{\pm 1},c_2
    \ra_{\CT_1} + 2C \bigr] + \la c_1^{\pm 1},x \ra_{\CT_1}.
  \end{align*}
  A similar proof works if $T_1$ is dual to an HNN-extension.  This
  shows \eqref{al:2}.  The inequality \eqref{al:5} is just an
  application of the bounded cancellation lemma using the cyclically
  reduced expression for $\delta_2^{-n}(x)$.
\end{proof}

These estimates allow us to show our main technical result concerning
the stable currents.

\begin{theorem}\label{th:B}
  Let $T_1$ and $T_2$ be very small cyclic trees that fill, with edge
  stabilizers $c_1$ and $c_2$ and associated Dehn twist automorphisms
  $\delta_1$ and $\delta_2$. Let $N \geq 0$ be such that for $n
  \geq N$, we have that $\delta_1^n\delta_2^{-n}$ is a hyperbolic
  fully irreducible outer automorphism with stable and unstable
  currents $[\mu_+^n]$ and $[\mu_-^n]$ in $\BP Curr(F_k)$.  Then: \[
  \lim_{n \to \infty} [\mu_+^n] = [ \eta_{c_1}] \mbox{ and } \lim_{n
    \to \infty} [\mu_-^n] = [\eta_{c_2}]. \]
\end{theorem}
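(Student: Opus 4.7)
The plan is to apply the convergence criterion of Lemma \ref{lm:convergence}: showing $[\mu_+^n] \to [\eta_{c_1}]$ reduces to proving that for every nontrivial $g \in F_k$,
\[ \frac{\la g, \mu_+^n\ra_{\CT_1}}{\omega_{\CT_1}(\mu_+^n)} \;\longrightarrow\; \frac{\la g, \eta_{c_1}\ra_{\CT_1}}{\omega_{\CT_1}(\eta_{c_1})} \qquad \text{as } n \to \infty. \]
By the North--South dynamics of $\delta_1^n\delta_2^{-n}$ on $\BP Curr(F_k)$ (Theorem \ref{th:NS-curr}), setting $w_m := (\delta_1^n\delta_2^{-n})^m(c_1)$ (suppressing the $n$ in the notation), we have $[\eta_{w_m}] \to [\mu_+^n]$ as $m \to \infty$ whenever $[\eta_{c_1}] \neq [\mu_-^n]$. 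I will reduce matters to showing the ratio $\la g, w_m\ra_{\CT_1}/\ell_{\CT_1}(w_m)$ converges to the target as $n \to \infty$ \emph{uniformly in $m \geq 1$}; exchanging the limits $m \to \infty$ and $n \to \infty$ will then yield the result.

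To produce this uniform estimate, set $z_m = \delta_2^{-n}(w_{m-1})$, so $w_m = \delta_1^n(z_m)$. Theorem \ref{th:linear} and Lemma \ref{lm:inequalites} \eqref{al:5} yield $\ell_{T_1}(z_m) \sim n\,\ell_{T_1}(c_2)\,\ell_{T_2}(w_{m-1})$ and $\ell_{\CT_1}(z_m) = O\bigl(n\,\ell_{T_2}(w_{m-1}) + \ell_{\CT_1}(w_{m-1})\bigr)$, while inequalities \eqref{al:3}--\eqref{al:4} give $\ell_{\CT_1}(w_m) \sim n\,\ell_{T_1}(z_m)$. In the $T_1$-reduced form of $z_m$, almost all letters of $w_m = \delta_1^n(z_m)$ reside in the twisted blocks $c_1^{i_s+n}, c_1^{j_s-n}$, while the remaining original segments $x_s, y_s$ contribute total length $O(\ell_{\CT_1}(z_m))$. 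Consequently, for $g$ not a power of $c_1$, occurrences of $g^{\pm 1}$ in $w_m$ must lie in these short original segments (up to bounded cancellation via Lemma \ref{lm:occurance-cancellation}), giving $\la g, w_m\ra_{\CT_1}/\ell_{\CT_1}(w_m) = O(1/n) \to 0 = \la g, \eta_{c_1}\ra_{\CT_1}/\omega_{\CT_1}(\eta_{c_1})$; for $g = c_1^{\pm r}$, inequality \eqref{al:1} forces the ratio to the correct target value.

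The main obstacle is maintaining uniformity in $m$: the $O(1/n)$ bound requires $\ell_{\CT_1}(w_{m-1})/\ell_{T_2}(w_{m-1})$ to stay bounded by a constant depending only on $T_1, T_2, c_1, c_2$. Iterating the above estimates (together with their $T_1 \leftrightarrow T_2$ analogues), both $\ell_{\CT_1}(w_m)$ and $\ell_{T_2}(w_m)$ scale by a common leading factor $n^2\,\ell_{T_1}(c_2)\,\ell_{T_2}(c_1)$ per application of $\delta_1^n\delta_2^{-n}$, so their ratio stabilizes at $1/\ell_{T_2}(c_1)$, which is finite because $\ell_{T_2}(c_1) > 0$ by the filling hypothesis. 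Once uniformity is established, $[\mu_+^n] \to [\eta_{c_1}]$ follows, and the symmetric argument applied to $(\delta_1^n\delta_2^{-n})^{-1} = \delta_2^n\delta_1^{-n}$ with $(T_1,c_1)$ and $(T_2,c_2)$ interchanged yields $[\mu_-^n] \to [\eta_{c_2}]$. The required separation $[\eta_{c_1}] \neq [\mu_-^n]$ for large $n$ (to invoke the North--South convergence above) holds because, if $[\eta_{c_1}] = [\mu_-^n]$ along a subsequence, the symmetric argument would place $[\mu_-^n] = [\eta_{c_1}]$ arbitrarily close to $[\eta_{c_2}]$, contradicting the fact that the filling hypothesis prevents $c_1$ from being conjugate to a power of $c_2$.
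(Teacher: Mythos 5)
Your overall strategy coincides with the paper's: approximate $[\mu_+^n]$ by the counting currents of the iterates of $\delta_1^n\delta_2^{-n}$ applied to a fixed conjugacy class, use the estimates of Theorem \ref{th:linear} and Lemma \ref{lm:inequalites} to show that the proportion of $c_1^{\pm1}$--letters tends to $1$ with error $O(1/n)$ uniformly in $m$, split into the cases $g = c_1^{\pm r}$ and $g$ not a power of $c_1$, and conclude via Lemma \ref{lm:convergence} by taking $m\to\infty$ before $n\to\infty$. The one place you genuinely diverge is the step you correctly single out as the main obstacle: the uniform-in-$m$ comparison $\ell_{\CT_1}(w_m) \leq K\,\ell_{T_2}(w_m)$. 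The paper obtains this immediately by quoting Lemma~5.2 of \cite{un:CP} (which gives $\ell_{T_2}(\alpha_n^m) \geq \ell_{T_1}(\alpha_n^m)$ for all $m$ once $n$ is large) together with Theorem~1.4 of \cite{KL_Intersection} (for filling trees, $\ell_{\CT_1} \leq K(\ell_{T_1}+\ell_{T_2})$), whereas you propose to derive it internally by iterating the growth estimates. Your route can be made to work, but as written it is only heuristic: the claim that ``the ratio stabilizes at $1/\ell_{T_2}(c_1)$'' must be replaced by an induction on $m$ showing that $R_m := \ell_{\CT_1}(w_m)/\ell_{T_2}(w_m)$ satisfies a recursion of the shape $R_m \leq \bigl(n^2 A + O(n) + O(n)R_{m-1}\bigr)/\bigl(n^2AB - O(n) - O(n)R_{m-1}\bigr)$ with $A=\ell_{T_1}(c_2)$ and $B=\ell_{T_2}(c_1)>0$, so that a bound such as $R_{m-1}\le 2/B$ propagates once $n$ exceeds a threshold independent of $m$; without this check the $O(1/n)$ errors could a priori accumulate over the iteration. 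Two smaller points: (i) the separation $[\eta_{c_1}]\neq[\mu_-^n]$ you address at the end is not needed, since Theorem \ref{th:NS-curr} as stated gives $[\phi^m\eta_h]\to[\mu_+]$ for \emph{every} nontrivial $h$; and (ii) for $g$ not a power of $c_1$, an occurrence of $g^{\pm1}$ need not lie entirely inside an ``original segment''---the correct count, as in the paper's inequality \eqref{eq:measure-g}, is that each such occurrence contains a letter $a_0^{\pm1}$ with $a_0\neq c_1$, and those letters constitute an $O(1/n)$ fraction of $w_m$.
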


\begin{proof}
  Let $\CT_1$ be a basis for $F_k$ relative to $T_1$.  Denote by
  $\phi_n = \delta_1^n \delta_2^{-n}$.  Fix an element $a \in \CT_1$,
  denote its conjugacy class by $\alpha$, and denote $\phi_n^m(\alpha)$
  by $\alpha_n^m$.  Hence $\phi_n^m\eta_{\alpha} = \eta_{\alpha_n^m}$.
  As $\ell_{T_2}(\alpha) > \ell_{T_1}(\alpha) = 0$, Lemma 5.2 of
  \cite{un:CP} shows that since $n$ is sufficiently large, for $m \geq
  0$ we have $\ell_{T_2}(\alpha_n^m) \geq \ell_{T_1}(\alpha_n^m)$.
  Let $K > 0$ be such that for all $m,n \geq 0$:
  \[\ell_{\CT_1}(\alpha_n^m) \leq K(\ell_{T_1}(\alpha_n^m) +
  \ell_{T_2}(\alpha_n^m)) \leq 2K\ell_{T_2}(\alpha_n^m).\] Such a $K$
  exists by \cite[Theorem~1.4]{KL_Intersection}.

  Then for each $n \geq N$, as $\mu_+^n$ is the stable current for
  $\phi_n$, from the North-South dynamics of $\phi_n$ on $\BP
  Curr(F_k)$ (Theorem \ref{th:NS-curr}), we have for any $g \in F_k$
  and $\epsilon > 0$, a constant $M = M(n,g,\frac{\epsilon}{2})$ such
  that for $m \geq M$ (Lemma \ref{lm:convergence}):
  \begin{equation}\label{eq:rational-approx}
    \left|
      \frac{\la g, \eta_{\alpha_n^m} \ra_{\CT_1} }
      {\omega_{\CT_1}(\eta_{\alpha_n^m})} -  
      \frac{\la g, \mu_+^n \ra_{\CT_1} }
      {\omega_{\CT_1}(\mu_+^n)}
    \right| < \frac{\epsilon}{2}.
  \end{equation}
  
  We analyze how the current $\eta_{\alpha_n^m}$ changes as $m \to
  \infty$ in terms of $n$.  Fix a basis $\CT_2$ that is relative to
  $T_2$ such that $c_2$ is cyclically reduced with respect to $\CT_1$,
  and let $C$ be larger than either constant $C = C(T_1,T_2)$ from
  Theorem \ref{th:linear} or the constant $C$ from Lemma
  \ref{lm:occurance-cancellation}, using the bases $\CT_1$ and $\CT_2$.
  Applying \eqref{al:twist-lb}, \eqref{al:1} and \eqref{al:2}, for any
  $m \geq 0$ and $n \geq r > 0$ we have:
  \begin{align*}
    \la c_1^{\pm r}, \delta_1^n\delta_2^{-n}(\alpha_n^m) \ra_{\CT_1} &
    \geq (n-r+1)\ell_{T_1}(\delta_2^{-n}(\alpha_n^m)) - \la
    c_1^{\pm 1}, \delta_2^{-n}(\alpha_n^m)  \ra_{\CT_1} \\
    & \geq (n-r+1)\Bigl[\ell_{T_2}(\alpha_n^m)\bigl[ n\ell_{T_1}(c_2)
    - (C + 1) \bigr] \Bigr] \\
    & \qquad - \Bigl[ \ell_{T_2}(\alpha_n^m)\bigl[n\la c_1^{\pm 1},
    c_2 \ra_{\CT_1} + 2C\bigr] +
    \la c_1^{\pm 1}, \alpha_n^m \ra_{\CT_1} \Bigr] \\
    & \geq n^2\ell_{T_2}(\alpha_n^m)\ell_{T_1}(c_2) \\
    & \qquad - n\ell_{T_2}(\alpha_n^m)\bigl[ (C+1) +
    (r-1)\ell_{T_1}(c_2) + \la c_1^{\pm 1},c_2 \ra_{\CT_1} \bigr] \\
    & \qquad - 2C\ell_{T_1}(\alpha^m_n) - \ell_{\CT_1}(\alpha_n^m) \\
    & \geq \ell_{T_2}(\alpha_n^m)\Bigl[n^2\ell_{T_1}(c_2)  \\
    & \qquad - n\bigl[ (C+1) + (r-1)\ell_{T_1}(c_2) + \la c_1^{\pm
      1},c_2 \ra_{\CT_1} \bigr] -
    (2C + 2K) \Bigr] \\
    & \geq \ell_{T_2}(\alpha_n^m)\bigl[n^2\ell_{T_1}(c_2) - nC_1 - C_2
    \bigr]
  \end{align*}
  for some constants $C_1 \geq 0$ and $C_2 \geq 0$ that do not depend
  on $m$.  Applying \eqref{al:twist-ub}, \eqref{al:3} and \eqref{al:5}
  we also have for any $m \geq 0$ and $n > 0$:
  \begin{align*}
    \ell_{\CT_1}(\delta_1^n\delta_2^{-n}(\alpha_n^m)) & \leq
    n\ell_{T_1}(\delta_2^{-n}(\alpha_n^m)) +
    \ell_{\CT_1}(\delta_2^{-n}(\alpha_n^m)) \\
    & \leq n\Bigl[\ell_{T_2}(\alpha_n^m)\bigl[n\ell_{T_1}(c_2) +
    (C + 1) \bigr] \Bigr] \\
    & \qquad + \ell_{T_2}(\alpha_n^m)\bigl[
    n\ell_{\CT_1}(c_2) + 2C \bigr] + \ell_{\CT_1}(\alpha_n^m) \\
    & \leq n^2\ell_{T_2}(x)\ell_{T_1}(c_2) \\
    & \qquad + n\ell_{T_2}(\alpha_n^m)\bigl[ (C+1) +
    \ell_{\CT_1}(c_2) \bigr] + (2C + 2K)\ell_{T_1}(\alpha_n^m) \\
    & \leq \ell_{T_2}(\alpha_n^m) \bigl[n^2\ell_{T_1}(c_2) + nC'_1
    + C'_2 \bigr]
  \end{align*}
  for some constants $C'_1 \geq 0$ and $C'_2 \geq 0$ that do not
  depend on $m$.  Therefore, given $r > 0$ there are constants
  $\beta_1 \geq 0$ and $\beta_2 \geq 0$ that do not depend on $m$ such
  that for any $n \geq r$:
  \begin{equation}\label{eq:numerator}
    \ell_{\CT_1}(\delta_1^n\delta_2^{-n}(\alpha_n^m)) - 
    \la c_1^{\pm r}, \delta_1^n\delta_2^{-n}(\alpha_n^m) \ra_{\CT_1} \leq 
    \ell_{T_2}(\alpha_n^m)\bigl[n\beta_1 + \beta_2\bigr].
  \end{equation}
  Now, applying \eqref{al:twist-lb} and \eqref{al:4}, we have for any
  $m \geq 0$ and $n > 0$:
  \begin{align*}
    \ell_{\CT_1}(\delta_1^n\delta_2^{-n}(\alpha_n^m)) & \geq
    n\ell_{T_1}(\delta_2^{-n}(\alpha_n^m)) +
    \ell_{\CT_1}(\delta_2^{-n}(\alpha_n^m)) -
    \la c_1^{\pm 1}, \delta_2^{-n}(\alpha_n^m) \ra_{\CT_1} \\
    & \geq n\Bigl[\ell_{T_2}(\alpha_n^m)\bigl[n\ell_{T_1}(c_2)
    - (C+1) \bigr] \Bigr] \\
    & \qquad - \ell_{T_2}(\alpha_n^m)\bigl[n\ell_{\CT_1}(c_2) + 2C
    \bigr] - \la c_1^{\pm 1}, \alpha_n^m
    \ra_{\CT_1} \\
    & \geq n^2\ell_{T_2}(\alpha_n^m)\ell_{T_1}(c_2) \\
    & \qquad - n\ell_{T_2}(\alpha_n^m)\bigl[(C+1) + \ell_{\CT_1}(c_2)
    \bigr] - (2C+2K)\ell_{T_2}(\alpha_n^m)
  \end{align*}
  Therefore, there are constants $\gamma_1 \geq 0$, $\gamma_2 \geq 0$
  and $\gamma_3 \geq 0$ that do not depend on $m$ such that for $n >
  0$:
  \begin{equation}\label{eq:denominator}
    \ell_{\CT_1}(\delta_1^n\delta_2^{-n}(\alpha_n^m)) \geq 
    \ell_{T_2}(\alpha_n^m) \bigl[n^2\gamma_1 - n\gamma_2 - 
    \gamma_3\bigr].
  \end{equation}
  
  As a first approximation, we will show that the currents
  $\eta_{\alpha^m_n}$ converge to the correct value on
  $Cyl_{\CT_1}(c_1^r)$.  Notice $\eta_{c_1}(Cyl_{\CT_1}(c_1^r)) = 1$.
  Suppose $g = c_1^{\pm r}$ for some $r > 0$.  Let $\epsilon > 0$ and
  fix $n \geq \max\{ N,r\}$ large enough such that
  $\epsilon(n^2\gamma_1 - n\gamma_2 - \gamma_3) > 2(n\beta_1 +
  \beta_2)$.  Now let $m \geq M(n,g,\frac{\epsilon}{2})$.  Then:
  \begin{align}
    \left| \frac{\la g,\eta_{c_1}
        \ra_{\CT_1}}{\omega_{\CT_1}(\eta_{c_1})} - \frac{\la g,
        \mu_+^n\ra_{\CT_1}}{\omega_{\CT_1}(\mu_+^n)} \right| & \leq
    \left| \frac{\la g,\eta_{c_1}
        \ra_{\CT_1}}{\omega_{\CT_1}(\eta_{c_1})} - \frac{\la g,
        \eta_{\alpha_n^{m+1}}\ra_{\CT_1}}{\omega_{\CT_1}(\eta_{\alpha_n^{m+1}})}
    \right|+ \left| \frac{\la g,\eta_{\alpha_n^{m+1}}
        \ra_{\CT_1}}{\omega_{\CT_1}(\eta_{\alpha_n^{m+1}})} -
      \frac{\la g,
        \mu_+^n\ra_{\CT_1}}{\omega_{\CT_1}(\mu_+^n)} \right| \notag \\
    & < \left| 1 - \frac{\la g,
        \eta_{\alpha_n^{m+1}}\ra_{\CT_1}}{\omega_{\CT_1}(\eta_{\alpha_n^{m+1}})}
    \right| + \frac{\epsilon}{2} \notag \\
    & = \left| \frac{ \ell_{\CT_1}(\delta_1^n\delta_2^{-n}(\alpha_n^m)) -
        \la c_1^{\pm r}, \delta_1^n\delta_2^{-n}(\alpha_n^m)
        \ra_{\CT_1}}{\ell_{\CT_1}(\delta_1^n\delta_2^{-n}(\alpha_n^m))}
    \right| + \frac{\epsilon}{2} \notag \\
    & \leq \left| \frac{\ell_{T_2}(\alpha_n^m)\bigl[ n\beta_1
        + \beta_2 \bigr]}{\ell_{T_2}(\alpha_n^m)\bigl[
        n^2\gamma_1 - n\gamma_2 - \gamma_3 \bigr]} \right| +
    \frac{\epsilon}{2} \notag \\
    & < \frac{\epsilon}{2} + \frac{\epsilon}{2} =
    \epsilon.\label{al:measure-c}
  \end{align}
  Now suppose $g \neq c_1^{\pm r}$ for any $r > 0$; in this case
  $\eta_{c_1}(Cyl_{\CT_1}(g)) = 0$.  There is some $a_0 \in \CT_1 - \{
  c_1 \}$ such that $\la a_0^{\pm 1}, g \ra_{\CT_1} > 0$.  Therefore
  for any $m \geq 0$:
  \begin{equation}\label{eq:measure-g}
    \la a_0^{\pm 1}, \alpha_n^m \ra_{\CT_1}\la a_0^{\pm 1}, g \ra_{\CT_1} 
    \geq \la g^{\pm 1}, \alpha_n^m \ra_{\CT_1}
  \end{equation}
  as every occurrence of $g^{\pm 1}$ in $\alpha_n^m$ contains some
  occurrence of $a_0^{\pm 1}$ in $\alpha_n^m$ and such an occurrence
  can only be used $\la a_0^{\pm 1}, g\ra_{\CT_1}$ times. Since:
  \begin{equation*}
    \frac{1}{\ell_{\CT_1}(\alpha_n^m)}\sum_{x \in \CT_1 - \{ c_1 \}} 
    \la x^{\pm 1},\alpha_n^m \ra_{\CT_1} = 1 - \frac{\la c_1^{\pm 1}, \alpha_n^m 
      \ra_{\CT_1}}{\ell_{\CT_1}(\alpha_n^m)}
  \end{equation*}
  the computation in \eqref{al:measure-c} combined with
  \eqref{eq:measure-g} shows that there is an $n = n(g,\epsilon)$ such
  that $2\la g^{\pm 1}, \alpha_n^m \ra_{\CT_1} <
  \epsilon\ell_{\CT_1}(\alpha_n^m)$ for $m$ sufficiently large.  Then
  for $m \geq M(n,g,\frac{\epsilon}{2})$:
  \begin{align}
    \left| \frac{\la g,\eta_{c_1}
        \ra_{\CT_1}}{\omega_{\CT_1}(\eta_{c_1})} - \frac{\la g,
        \mu_+^n\ra_{\CT_1}}{\omega_{\CT_1}(\mu_+^n)} \right| & \leq
    \left| \frac{\la g,\eta_{c_1}
        \ra_{\CT_1}}{\omega_{\CT_1}(\eta_{c_1})} - \frac{\la g,
        \eta_{\alpha_n^m}\ra_{\CT_1}}{\omega_{\CT_1}(\eta_{\alpha_n^m})}
    \right|+ \left| \frac{\la g,\eta_{\alpha_n^m}
        \ra_{\CT_1}}{\omega_{\CT_1}(\eta_{\alpha_n^m})} - \frac{\la g,
        \mu_+^n\ra_{\CT_1}}{\omega_{\CT_1}(\mu_+^n)} \right| \notag \\
    & < \left| \frac{\la g, \eta_{\alpha_n^m}
        \ra_{\CT_1}}{\omega_{\CT_1}(\eta_{\alpha_n^m})}
    \right| + \frac{\epsilon}{2} \notag \\
    & = \left| \frac{\la g^{\pm 1},
        \alpha_n^m\ra_{\CT_1}}{\ell_{\CT_1}(\alpha_n^m)} \right| +
    \frac{\epsilon}{2} \notag \\
    & < \frac{\epsilon}{2} + \frac{\epsilon}{2} =
    \epsilon.\label{al:measure-g}
  \end{align}

  Putting together \eqref{al:measure-c} and \eqref{al:measure-g}, we
  have that $\lim_{n \to \infty} [\mu_+^n] = [\eta_{c_1}]$.  The same
  argument applied to $\phi^{-1}_n$ shows that $\lim_{n \to \infty}
  [\mu_-^n] = [\eta_{c_2}]$.
\end{proof}

\begin{remark}\label{rm:analogy}
  We remark that Theorem \ref{th:B} is analogous to the surface
  setting.  Given two simple closed curves $\alpha,\beta \subset S_g$
  that fill, the stable and unstable measured laminations
  $[\Lambda_+^n]$ and $[\Lambda_-^n]$ in the Thurston boundary of
  Teichm\"uller space associated to the pseudo-Anosov mapping classes
  $\delta_\alpha^n\delta_\beta^{-n}$ converge to $[\alpha]$ and
  $[\beta]$ respectively.  Here $\delta_\alpha$ and $\delta_\beta$ are
  the respective Dehn twist homeomorphisms about $\alpha$ and $\beta$.

  This raises a subtle point.  To the hyperbolic fully irreducible
  outer automorphisms $\delta_1^n\delta_2^{-n}$ in Theorem \ref{th:B}
  are also associated the stable and unstable trees $[T_+^n]$ and
  $[T_-^n]$ in $\overline{CV}_k$ (see section \ref{ssc:outer}).  As
  $\overline{CV}_k$ is compact, the associated sequences $\{[T_+^n]\}$
  and $\{[T_-^n]\}$ have accumulation points.  But in contrast with
  Theorem \ref{th:B}, it is not clear whether there is a single
  accumulation point for each respective sequence or how to
  characterize an accumulation point for either sequence.  By Theorems
  \ref{th:intersection}, \ref{th:ergodic} and \ref{th:B}, the element
  $c_2$ has a fixed point in any accumulation point of $\{[T_+^n]\}$,
  and similarly $c_1$ has a fixed point in any accumulation point of
  $\{[T_-^n]\}$.  However it is unlikely that this is a
  characterization of the accumulation points for the sequences
  $\{[T_+^n]\}$ and $\{[T_-^n]\}$.
\end{remark}

The following Corollary is essential for our main theorem (Theorem
\ref{th:A}).

\begin{corollary}\label{co:stable-limit}
  Let $T_1$ and $T_2$ be very small cyclic trees that fill, with edge
  stabilizers $c_1$ and $c_2$ and associated Dehn twist automorphisms
  $\delta_1$ and $\delta_2$. Let $N \geq 0$ be such that for $n
  \geq N$, we have that $\delta_1^n\delta_2^{-n}$ is a hyperbolic
  fully irreducible outer automorphism with stable and unstable
  currents $[\mu_+^n]$ and $[\mu_-^n]$ in $\BP Curr(F_k)$ and stable
  and unstable trees $[T_+^n]$ and $[T_-^n]$ in $\overline{CV}_k$.
  For $\psi \in \Out F_k$ such that the conjugacy class of $\psi(c_1)$ is
  not equal to the conjugacy class of $c_2$, there is an $N_1 \geq N$
  such that for $n \geq N_1$ we have $[\psi\mu_+^n] \neq [\mu_-^n]$
  and $[T_+^n\psi] \neq [T_-^n]$.
\end{corollary}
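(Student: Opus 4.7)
The plan is to derive both nonequalities from Theorem~\ref{th:B} together with the intersection form structure of Theorems~\ref{th:intersection} and~\ref{th:ergodic}.

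First I would handle the currents. Theorem~\ref{th:B} gives $\lim_{n \to \infty}[\mu_+^n] = [\eta_{c_1}]$ and $\lim_{n \to \infty}[\mu_-^n] = [\eta_{c_2}]$ in $\BP Curr(F_k)$. Since the action of $\psi \in \Out F_k$ on $\BP Curr(F_k)$ is continuous and satisfies $\psi \eta_{c_1} = \eta_{\psi(c_1)}$, passing to the limit yields $\lim_{n \to \infty} [\psi \mu_+^n] = [\eta_{\psi(c_1)}]$. The hypothesis that the conjugacy class of $\psi(c_1)$ differs from that of $c_2$ forces $[\eta_{\psi(c_1)}] \neq [\eta_{c_2}]$, because $c_1$ and $c_2$ are edge generators and the counting currents $[\eta_{c_1}], [\eta_{c_2}]$ have disjoint supports on $\partial^2 F_k/\sigma$ once their conjugacy classes (up to inversion) differ. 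Since $\BP Curr(F_k)$ is Hausdorff, there exist disjoint open neighborhoods of the two limit points; hence for $n$ sufficiently large, $[\psi \mu_+^n]$ and $[\mu_-^n]$ lie in these disjoint neighborhoods and cannot coincide.

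Next I would transfer this inequality to the trees via the ``unique ergodicity'' of Theorem~\ref{th:ergodic}. Suppose for contradiction that $[T_+^n \psi] = [T_-^n]$ for some $n \geq N_1$, where $N_1$ is the threshold from the first step. Choosing representatives in $\overline{cv}_k$, this means $T_+^n \psi = \lambda T_-^n$ for some $\lambda > 0$, so $\la T_+^n \psi, \mu \ra = 0$ iff $\la T_-^n, \mu \ra = 0$ for every $\mu \in Curr(F_k) - \{0\}$. By Theorem~\ref{th:ergodic}(1), the right-hand vanishing is equivalent to $[\mu] = [\mu_+^n]$. By the $\Out F_k$--invariance of the intersection form (Theorem~\ref{th:intersection}(2)), the left-hand side equals $\la T_+^n, \psi \mu \ra$, which vanishes iff $[\psi \mu] = [\mu_-^n]$, again by Theorem~\ref{th:ergodic}(1). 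Applying this equivalence to $\mu = \mu_+^n$ gives $[\psi \mu_+^n] = [\mu_-^n]$, contradicting the first step. Thus a single $N_1 \geq N$ witnesses both required nonequalities.

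The whole argument is really just assembly of earlier results; the only delicate point is the step verifying $[\eta_{\psi(c_1)}] \neq [\eta_{c_2}]$ from the conjugacy-class hypothesis, which requires knowing that the projectivized counting current of an element determines its conjugacy class up to inversion and taking roots. Once that is in hand, continuity of the $\Out F_k$--action and the duality between trees and currents through $\la \,\cdot\,,\,\cdot\,\ra$ make the rest routine.
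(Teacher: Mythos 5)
Your proposal is correct and follows essentially the same route as the paper: the first nonequality comes from Theorem~\ref{th:B}, continuity of the $\psi$--action on $\BP Curr(F_k)$, and disjoint neighborhoods of $[\eta_{\psi(c_1)}]$ and $[\eta_{c_2}]$; the second comes from pairing both trees against $\mu_+^n$ via the intersection form and invoking Theorem~\ref{th:ergodic} (the paper states this directly where you argue by contradiction, but the content is identical). Your flagged worry about $[\eta_{\psi(c_1)}]\neq[\eta_{c_2}]$ is legitimate but is treated the same way in the paper, which simply asserts it from the conjugacy-class hypothesis.
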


\begin{proof}
  As the conjugacy class of $\psi(c_1)$ is not equal to the conjugacy
  class of $c_2$ we have that $[\psi\eta_{c_1}] \neq [\eta_{c_2}]$.
  Fix disjoint open sets $U_1$ and $U_2$ of $\BP Curr(F_k)$ containing
  $[\psi\eta_{c_1}]$ and $[\eta_{c_2}]$ respectively.  By Theorem
  \ref{th:B}, there is an $N_1$ such that for $n \geq N_1$ we have:
  $[\psi\mu_+^n] \in U_1$ and $[\mu_-^n] \in U_2$, and hence as $U_1$
  and $U_2$ are disjoint, $[\psi\mu_+^n] \neq [\mu_-^n]$.

  Additionally for $n \geq N_1$ we have $\la T_+^n\psi, \mu_+^n \ra =
  \la T_+^n, \psi\mu_+^n \ra > 0$ by Theorem \ref{th:ergodic}, as
  $[\psi\mu_+^n] \neq [\mu_-^n]$.  As $\la T_-^n,\mu_+^n \ra = 0$, 
  this shows that $[T_+^n\psi] \neq [T_-^n]$.
\end{proof}


\section{\texorpdfstring{A hyperbolic fully irreducible automorphism
    for every matrix in $\GL(k,\BZ)$}{A hyperbolic fully irreducible
    automorphism for every matrix in GL(k,Z)}}\label{sc:surject}
  
Our main theorem now follows easily.

\begin{theorem}\label{th:A}
  Suppose $k \geq 3$.  For any $A \in \GL(k,\BZ)$, there is a
  hyperbolic fully irreducible outer automorphism $\phi \in \Out F_k$
  such that $\phi_* = A$.
\end{theorem}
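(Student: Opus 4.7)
The plan is to assemble the machinery already developed in the paper to reduce Theorem \ref{th:A} to a straightforward existence argument. First, since the map $\Out F_k \to \GL(k,\BZ)$ is surjective, pick any $\psi \in \Out F_k$ with $\psi_* = A$. The strategy is then to produce a hyperbolic fully irreducible element $\phi$ lying in the kernel $\IA_k$, such that $\phi\psi$ is again hyperbolic and fully irreducible; since then $(\phi\psi)_* = \phi_*\psi_* = A$.

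To build $\phi$, I would look for two very small cyclic trees $T_1$ and $T_2$ that fill, with cyclic edge stabilizers generated by $c_1$ and $c_2$, subject to the additional constraint that the conjugacy class of $\psi(c_1)$ is different from the conjugacy class of $c_2$. Let $\delta_1,\delta_2$ be the associated Dehn twist automorphisms. By Theorem \ref{th:twist} there is an $N$ such that for all $n \geq N$, the element $\phi_n := \delta_1^n\delta_2^{-n}$ is hyperbolic fully irreducible. By Corollary \ref{co:stable-limit}, the hypothesis $\psi(c_1) \not\sim c_2$ ensures there is some $N_1 \geq N$ such that for every $n \geq N_1$, both $[\psi\mu_+^n] \neq [\mu_-^n]$ in $\BP Curr(F_k)$ and $[T_+^n\psi] \neq [T_-^n]$ in $\overline{CV}_k$. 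Fix any such $n$. Then Proposition \ref{prop:hyperbolic} supplies an $M_1$ such that $\phi_n^m\psi$ is hyperbolic for $m \geq M_1$, and Proposition \ref{prop:fi} supplies an $M_2$ such that $\phi_n^m\psi$ is fully irreducible for $m \geq M_2$. Setting $\phi = \phi_n^m$ for any $m \geq \max\{M_1,M_2\}$ gives a hyperbolic fully irreducible element $\phi\psi$, and because each $\delta_i$ lies in $\IA_k$, so does $\phi_n$ and hence $\phi_n^m$; therefore $(\phi_n^m\psi)_* = \psi_* = A$, completing the proof.

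The one nontrivial input is the \emph{existence} of filling very small cyclic trees $T_1, T_2$ with the added property that $\psi(c_1)$ and $c_2$ are not conjugate; this is the step I expect to be the real obstacle, though a minor one. For $k \geq 3$ there is abundant freedom: given any fixed proper cyclic splitting $T_1$ (for instance, the HNN splitting dual to a primitive element in a free basis), the conjugacy class of $\psi(c_1)$ is a single class, so one needs only to choose a filling partner $T_2$ whose edge stabilizer $c_2$ is not in this class. Because infinitely many cyclic splittings of $F_k$ fill with a given $T_1$ (one can, for example, vary $T_2$ among HNN splittings dual to elements of the form $x_1x_j^N$ for various $j, N$ with $k\ge 3$, all of which fill with a fixed $T_1$ chosen dual to another basis element), avoiding the single conjugacy class of $\psi(c_1)$ is possible. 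The remark \ref{rm:k=2} explains the failure for $k=2$: the map $\phi \mapsto \phi_*$ is an isomorphism there, so $\IA_2$ is trivial and no such construction is available.
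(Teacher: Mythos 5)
Your overall architecture is exactly that of the paper: lift $A$ to $\psi$, build $\phi_n=\delta_1^n\delta_2^{-n}$ from a filling pair of cyclic trees via Theorem \ref{th:twist}, use Corollary \ref{co:stable-limit} to verify the hypotheses of Propositions \ref{prop:hyperbolic} and \ref{prop:fi}, and note that the twists lie in $\IA_k$ so the homological image is unchanged. That reduction is correct and complete as far as it goes.

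The one place you defer --- producing a filling pair $T_1,T_2$ with $\psi(c_1)$ not conjugate to $c_2$ --- is, however, not dispatched by what you wrote, and it is the only step where the paper invokes a specific mechanism. You assert that HNN splittings dual to elements $x_1x_j^N$ all fill with a fixed splitting dual to another basis element; this is stated without proof, and it is not a triviality: filling requires $\vol_{T_1}(X)+\vol_{T_2}(X)>0$ for \emph{every} proper free factor and every cyclic subgroup $X$, a condition that is genuinely hard to verify for explicit pairs (indeed two splittings dual to basis elements will typically share elliptic free factors when $k\geq 3$, so the claim is suspect as stated). The paper instead takes a single tree $T$ with primitive edge stabilizer $c_1$ and a hyperbolic fully irreducible $\theta$, and uses Remark~2.7 of \cite{un:CP}: $T$ and $T\theta^\ell$ fill for all large $\ell$, with the edge stabilizer of $T\theta^\ell$ being $\theta^{-\ell}(c_1)$. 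Hyperbolicity of $\theta$ then lets one choose $\ell$ so that $\theta^{-\ell}(c_1)$ avoids the single conjugacy class $\psi(c_1)$. You should either cite that mechanism or supply an actual proof that your explicit family fills; as written this existence step is a gap.
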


\begin{proof}
  Fix $\psi \in \Out F_k$ such that $\psi_* = A$.  Let $T$ be a very
  small cyclic tree dual to an amalgamated free product with edge
  stabilizer $c_1$ (a primitive element of $F_k$) and associated Dehn
  twist $\delta_1$.  As is shown in \cite[Remark 2.7]{un:CP} given any
  hyperbolic fully irreducible automorphism $\theta \in \Out F_k$, the
  pair $T$ and $T\theta^\ell$ fill for sufficiently large $\ell$.  The
  edge stabilizer for $T\theta^\ell$ is $\theta^{-\ell}(c_1)$.  Thus
  for large enough $\ell$ we can assure that the very small cyclic
  trees $T$ and $T\theta^\ell$ fill and that the conjugacy class of
  $\psi(c_1)$ is not equal to the conjugacy class of
  $\theta^{-\ell}(c_1)$ (the edge stabilizer for $T\theta^\ell$).

  Let $\delta_2$ be the associated Dehn twist for $T\theta^\ell$.
  By Theorem \ref{th:twist}, Propositions \ref{prop:hyperbolic}
  and \ref{prop:fi} and Corollary \ref{co:stable-limit}, for large $m$
  and $n$ the outer automorphism $(\delta_1^n\delta_2^{-n})^m\psi$ is
  a hyperbolic fully irreducible element.  Since both $\delta_1$ and
  $\delta_2$ act trivially on $H_1(F_k,\BZ)$, we have
  $((\delta_1^n\delta_2^{-n})^m\psi)_* = \psi_* = A$.
\end{proof}

\bibliography{bibliography}

\bibliographystyle{siam}

\end{document}